\documentclass[12pt, reqno]{amsart}

\usepackage{amsmath}
\usepackage{amsthm}
\usepackage{amssymb}
\usepackage{amsrefs}
\usepackage{mathrsfs}
\usepackage[usenames]{color}
\usepackage{ifthen}
\usepackage{testhyphens}
%
%THEOREM Environments-----------------------------------------
%
\newtheorem{thm}{}[section]
\newtheorem{theorem}[thm]{Theorem}

\newtheorem{lemma}[thm]{Lemma}
\newtheorem{proposition}[thm]{Proposition}

\theoremstyle{definition}
\newtheorem{definition}[thm]{Definition}
\theoremstyle{remark}
\newtheorem{remark}[thm]{Remark}

\numberwithin{equation}{section}
\allowdisplaybreaks

\newcommand{\Env}[2][]{%
\ifthenelse{ \equal{#1}{} }
{\ensuremath{#2_{\mathsf{c}}}}
{\ensuremath{#2_{\mathsf{c},#1}}}
}

\newcommand{\UU}{\ensuremath{\mathbb{U}}}

\newcommand{\uu}{\ensuremath{\mathbf{u}}}
\newcommand{\vv}{\ensuremath{\mathbf{v}}}
\newcommand{\LL}{\ensuremath{\mathbb{L}}}
\newcommand{\NN}{\ensuremath{\mathbb{N}}}
\newcommand{\FF}{\ensuremath{\mathbb{F}}}
\newcommand{\XX}{\ensuremath{\mathbb{X}}}
\newcommand{\YY}{\ensuremath{\mathbb{Y}}}
\newcommand{\xx}{\ensuremath{\mathbf{x}}}
\newcommand{\yy}{\ensuremath{\mathbf{y}}}
\newcommand{\ee}{\ensuremath{\mathbf{e}}}
\newcommand{\Klt}{\ensuremath{\mathcal{K}}}
\newcommand{\Cvx}{\ensuremath{\mathcal{C}}}
\newcommand{\EE}{\ensuremath{\mathcal{E}}}
\newcommand{\BB}{\ensuremath{\mathcal{B}}}
\newcommand{\Id}{\ensuremath{\mathrm{Id}}}
\newcommand{\supp}{\operatorname{supp}}

%------------------------------------------------------------------------
\begin{document}
%------------------------------------------------------------------------

\title[]{Projections and unconditional bases in direct sums of $\ell_p$ spaces, $0<p\le \infty$.}

\author[F. Albiac]{F. Albiac}
\address{Department of Mathematics, Statistics and Computer Sciences, and InaMat\\ Universidad P\'ublica de Navarra\\
Pamplona 31006\\ Spain} 
\email{fernando.albiac@unavarra.es}

\author[J. L. Ansorena]{J. L. Ansorena}
\address{Department of Mathematics and Computer Sciences\\
Universidad de La Rioja\\
Logro\~no 26004\\ Spain} 
\email{joseluis.ansorena@unirioja.es}

\subjclass[2000]{46A16, 46A35, 46A40, 46A45, 46B15}

\keywords{unconditional basis, quasi-Banach space, Banach lattice, Marriage Lemma}

%------------------ -----
\begin{abstract}
We show that every unconditional basis in a \emph{finite} direct sum $\bigoplus_{p\in A} \ell_p$, with $A\subset (0,\infty]$, splits into unconditional bases of each summand. This settles a 40 year old question raised in [A. Orty\'nski, \emph{Unconditional bases in $\ell_{p}\oplus\ell_{q},$ $0<p<q<1$}, Math. Nachr. \textbf{103} (1981), 109-116]. As an application we obtain that for any $A\subset (0,1]$ finite, the spaces $Z=\bigoplus_{p\in A} \ell_p$, $Z\oplus \ell_{2}$, and $Z\oplus c_{0}$ have a unique unconditional basis up to permutation.
\end{abstract}
%-----------------------

\thanks{Both authors supported by the Spanish Ministry for Science, Innovation, and Universities, Grant PGC2018-095366-B-I00 for \emph{An\'alisis Vectorial, Multilineal y Approximaci\'on}. The first-named author also acknowledges the support from Spanish Ministry for Economy and Competitivity, Grant MTM2016-76808-P for \emph{Operators, lattices, and structure of Banach spaces}.}

%-----------------------
\maketitle
%-----------------------

%-----------------------
\section{Introduction}
\noindent From a structural point of view, an important topic in vintage Banach space theory is the description of the unconditional bases in a given space. This note is devoted to advance the state of art of this subject by proving the following result.

\begin{theorem}[Main Theorem]\label{thm:main} Let $A$ be a finite set of indexes contained in $(0,\infty]$. If $(\xx_j)_{j\in J}$ is an unconditional basis of $\bigoplus_{p\in A} \ell_p$ (with the convention that $\ell_\infty$ means $c_0$ if $\infty\in A$) then there is a partition $(J_p)_{p\in A}$ of $J$ in such a way that $(\xx_j)_{j\in J_p}$ is a basis of $\ell_p$ for each $p\in A$.
\end{theorem}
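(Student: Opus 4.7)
The plan is to assign each basis vector $\xx_j$ to exactly one summand $\ell_p$, based on where its norm is essentially concentrated, and then to use a combinatorial matching (Hall's Marriage Lemma is hinted at in the keywords) to promote this assignment into a partition $(J_p)_{p\in A}$ of $J$ such that each block $(\xx_j)_{j\in J_p}$ is equivalent to an unconditional basis of $\ell_p$. The three main ingredients I anticipate needing are: boundedness of the canonical projections $P_p\colon Z\to\ell_p$ (which are bounded because the sum is finite); quantitative $p$-convexity estimates distinguishing the different $\ell_p$ quasi-norms; and the uniqueness up to permutation of the unconditional basis of each $\ell_p$ in the relevant range (Kalton for $0<p\le 1$, with the analogous classical results for $1<p<\infty$, $p\neq2$, and for $c_0$).

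First I would normalize $\|\xx_j\|_Z=1$ and write $\xx_j=\sum_{p\in A}\xx_{j,p}$ with $\xx_{j,p}=P_p(\xx_j)\in\ell_p$. For each fixed $p$, the family $(\xx_{j,p})_{j\in J}$ is an unconditional system in $\ell_p$ because unconditionality is preserved by bounded projections. A pigeonhole argument against the equivalence of quasi-norms on the finite direct sum $Z$ gives, for every $j$, a dominant index $p(j)\in A$ for which $\|\xx_{j,p(j)}\|_{p(j)}\ge\delta$ for some uniform $\delta>0$. To upgrade the assignment $j\mapsto p(j)$ into an honest partition, I would construct a bipartite graph with one side indexed by $J$ and the other by $\NN\times A$ (canonical basis vectors of $Z=\bigoplus_{p\in A}\ell_p$), drawing an edge whenever a coordinate of $\xx_{j,p}$ exceeds a threshold, and apply Hall's Marriage Lemma to extract a system of distinct representatives. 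The resulting partition $(J_p)_{p\in A}$ should make $P_p\colon\overline{\operatorname{span}}\{\xx_j:j\in J_p\}\to\ell_p$ into an isomorphism, and $(\xx_j)_{j\in J_p}$ into a sequence equivalent to a permutation of the canonical basis of $\ell_p$, which closes the argument via the classical uniqueness theorems.

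The crux of the proof, and the main obstacle I foresee, is ruling out basis vectors that \emph{straddle} two summands, i.e.\ whose norm is comparably distributed between $\ell_p$ and $\ell_q$ for distinct $p,q\in A$. Addressing this requires $p$-convexity inequalities sharp enough to detect the subtle norm-discrepancies between different $\ell_p$ spaces and to propagate them across large block-sums of basis vectors; this is especially delicate when two exponents in $A$ are close, and when $\ell_\infty=c_0$ participates (since $c_0$ has no nontrivial $p$-convexity). Once straddling is excluded, standard small-perturbation arguments together with the uniqueness of unconditional bases in each $\ell_p$ should yield the basis property for each block in the partition.
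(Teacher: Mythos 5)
Your proposal has two genuine gaps, one of which is fatal to the strategy as stated. First, the step you yourself flag as the crux --- excluding basis vectors whose mass straddles two summands --- is not a technicality to be handled by ``sharp enough $p$-convexity inequalities''; it is precisely the point where Orty\'nski's direct approach broke down for forty years whenever $1\in A$. Convexity/concavity estimates do yield total incomparability of the summands (Lemma~\ref{lem:TI}), but total incomparability of the \emph{spaces} does not by itself split the \emph{basis}: one must classify arbitrary complemented unconditional basic sequences in $\bigoplus_{p\le 1}\ell_p$, and the only known route is the Casazza--Kalton anti-Euclidean machinery (ultimately resting on Grothendieck's theorem via the fact that $\ell_1$ is anti-Euclidean), which shows that such sequences are permutatively equivalent to well complemented block basic sequences and hence, by universality, to subbases. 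The paper's actual proof does not localize the $\xx_j$ directly at all: it first passes to the Banach envelope $\widehat{\XX}\approx\ell_1\oplus\bigoplus_{p>1}\ell_p$, applies Edelstein--Wojtaszczyk there to obtain a partition indexed by $\{1\}\cup A_{\Cvx}$, and only then uses total incomparability together with Theorem~\ref{thm:A} to further split the $\ell_1$-block among the indices $p\le 1$. The Marriage Lemma you noticed in the keywords lives inside that cited machinery, not in a direct matching of basis vectors to unit vectors of $Z$.

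Second, your closing appeal to ``the uniqueness up to permutation of the unconditional basis of each $\ell_p$ \dots\ with the analogous classical results for $1<p<\infty$, $p\neq 2$'' invokes theorems that do not exist: by Lindenstrauss--Zippin, $c_0$, $\ell_1$ and $\ell_2$ are the \emph{only} Banach spaces with a unique unconditional basis, and $\ell_p$ for $1<p<\infty$, $p\neq 2$ has uncountably many pairwise non-equivalent ones. Accordingly, your intended conclusion that $(\xx_j)_{j\in J_p}$ is equivalent to a permutation of the canonical basis of $\ell_p$ is false in that range; the theorem only claims (and the paper only proves) that each block spans a space isomorphic to $\ell_p$, which for $p>1$ is extracted from the Edelstein--Wojtaszczyk decomposition of the envelope together with Kalton's theorem that the envelope map is an isomorphism on those blocks.
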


Theorem~\ref{thm:main} builds on previous work on the subject initiated by Edelstein and Wojtaszczyk in 1976, and continued by Orty\'nski in 1981. In \cite{EdWo1976} the authors described unconditional bases in finite direct sums of $\ell_p$ spaces for values of $p$ in the locally convex range, i.e., when the set $A$ in the statement of the theorem is contained in $[1,\infty]$. Five years later, Ort\'ynski ventured himself across the nonlocally convex border and extended Edelstein and Wojtaszczyk's result to the case when $A\subseteq(0,1)\cup (1,\infty]$. However, Orty\'nski's methods break down precisely when the set $A$ contains the index $p=1$, which led him to raise the question whether his theorem remained valid without any restriction on $p$ (see Remark 2 in \cite{Ortynski1981}*{p.\ 115}). Our goal is to fill this gap in the literature. With the help of fresh techniques derived from the theory of anti-Euclidean spaces developed by Casazza and Kalton in the late 1990' (hence not yet available when \cite{Ortynski1981} was written) we are able to prove Theorem~\ref{thm:main} in its whole generality, thus providing a positive answer to Orty\'nski's question.

\section{Preliminaries}
\noindent Throughout this article we employ the notation commonly used in Banach space theory as can be found in \cite{AlbiacKalton2016}. The only exception is that, for convenience in writing our results in a unified way, unless otherwise stated the symbol $\ell_{\infty}$ denotes the space of sequences tending to zero equipped with the supremum norm, usually denoted by $c_{0}$. Next we single out the notation that it is more heavily used. We denote by $\FF$ the real or complex scalar field and by $c_{00}(J)$ the set of all $(a_j)_{j\in J}\in \FF^J$ such that $|\{j\in J \colon a_j\not=0\}|<\infty$. We write by $\EE_{J}:=(\ee_j)_{i\in J}$ for the canonical unit vector system of $\FF^J$, i.e., $\ee_j=(\delta_{j,i})_{i\in J}$ for each $j\in J$, where $\delta_{j,i}=1$ if $i=j$ and $\delta_{i,j}=0$ otherwise. The symbol $\alpha_i\lesssim \beta_i$ for $i\in I$ means that the families of positive real numbers $(\alpha_i)_{i\in I}$ and $(\beta_i)_{i\in I}$ verify $\sup_{i\in I}\alpha_i/\beta_i <\infty$. If $\alpha_i\lesssim \beta_i$ and $\beta_i\lesssim \alpha_i$ for $i\in I$ we say $(\alpha_i)_{i\in I}$ are $(\beta_i)_{i\in I}$ are equivalent, and we write $\alpha_i\approx \beta_i$ for $i\in I$. The closed linear span of a set of vectors $A$ in a quasi-Banach space $\XX$ is denoted by $[A]$. If we have a finite family $(\XX_i)_{i\in F}$ of quasi-Banach spaces, the Cartesian product $\bigoplus_{i\in F}\XX_i$ of $\XX_{i}$'s with coordinatewise algebraic operations is a quasi-Banach space with the quasi-norm
\[
\left\Vert (\xx_i)_{i}\right\Vert=\sup_{i\in F} \Vert \xx_i\Vert,\quad \xx_i\in\XX_i.
\]
If $F=\{i, \dots, N\}$ and $\XX_i=\XX$ for $i\in F$ we simply write $\XX^N$ and call the resulting direct sum the $N$-fold product of $\XX$. A $2$-fold product will be called a \emph{square}.

For expositional ease and to fix the notation, below we gather well-known concepts scattered through the literature and the basic facts about them that we will need later on, both in Sections~\ref{Prepa} and \ref{MainSec}.

\subsection*{Quasi-Banach spaces and envelopes}
Quasi-Banach spaces provide the natural framework for our work. Recall that a \emph{quasi-Banach} space $\XX$ is a locally bounded topological vector space. This is equivalent to saying that the topology on $\XX$ is induced by a \emph{quasi-norm}, i.e., a map $\Vert \cdot\Vert\colon \XX\to [0,\infty)$ with the following properties:
\begin{enumerate}
\item[(i)] $\Vert x\Vert = 0$ if and only if $x=0$;

\item[(ii)] $\Vert \alpha x\Vert =|\alpha| \Vert x\Vert$ for $\alpha\in \FF$ and, $x\in \XX$,

\item[(iii)] there is a constant $\kappa\ge 1$ so that for all $x$ and $y$ in $\XX$ we have
\[
\Vert x +y\Vert \le \kappa (\Vert x\Vert +\Vert y\Vert).
\]
\end{enumerate}
If it is possible to take $\kappa=1$ we obtain a norm. A quasi-norm $\Vert\cdot\Vert$ is called \emph{$p$-norm} $(0<p\le 1)$ if it is $p$-subadditive, that is,
\[
\Vert x+y\Vert^{p}\le \Vert x\Vert^{p} +\Vert y\Vert ^{p},\qquad \forall x, y\in \XX.
\]
In this case the unit ball of $\XX$ is an absolutely $p$-convex set and $\XX$ is said to be $p$-convex. A quasi-norm clearly defines a metrizable vector topology on $\XX$ whose base of neighborhoods of zero is given by sets of the form $\{x\in \XX \colon \Vert x\Vert<1/n\}$, $n\in \NN$. If such topology is complete we say that $(\XX, \Vert\cdot\Vert)$ is a quasi-Banach space. If the quasi-norm is $p$-subadditive for some $0<p\le 1$, $\XX$ is said to be a $p$-convex quasi-Banach space ($p$-Banach space for short). Let us notice that a $p$-subadditive quasi-norm $\Vert\cdot\Vert$ induces an invariant metric topology on $\XX$ by the formula $d(x,y)= \Vert x-y\Vert^{p}$.

All Banach spaces are locally convex thanks to the triangle law of the norm, which translates geometrically in the spaces having a convex set as unit ball. In contrast, a quasi-Banach space $(\XX,\Vert\cdot\Vert)$ need not be $p$-convex for any $p\le 1$. The Aoki-Rolewicz theorem \cites{Aoki1942,Rolewicz1957} guarantees that, at least, $\XX$ is $p$-normable for some $0<p\le1$, i.e., $\XX$ can be endowed with an equivalent quasi-norm which is $p$-subadditive.

When dealing with a quasi-Banach space $\XX$ it is often convenient to know which is, the ``smallest'' Banach space containing $\XX$. Formally, the \emph{Banach envelope} of a quasi-Banach space $\XX$ consists of a Banach space $\widehat{\XX}$ together with a linear contraction $J_\XX\colon\XX \to \widehat{\XX}$ satisfying the following universal property: for every Banach space $\YY$ and every linear contraction $T\colon\XX \to\YY$ there is a unique linear contraction $\widehat{T}\colon \widehat{\XX}\to \YY$ such that $\widehat{T}\circ J_\XX=T$. Given a family of vectors (usually a basis) $\BB$ in $\XX$ we put $\widehat{\BB}:=J_\XX(\BB)$. A map $J$ from $\XX$ into a Banach space $\YY$ is said to be an \emph{envelope map} if the associated map $\widehat{J}\colon\widehat{\XX}\to\YY$ is an isomorphic embedding. If $J\colon\XX\to\YY$ is an envelope map there is a natural identification of $(J_\XX,\widehat{\XX})$ with $(J,\overline{J(\XX)})$. For instance, the Banach envelope of $\ell_p$ for $p<1$ is (naturally identified with) $\ell_{1}$, whereas the Banach envelope of $L_{p}([0,1])$ is $\{0\}$. The unfamiliar reader will find general information about quasi-Banach spaces in \cite{KPR1984}.

\subsection*{Unconditionality of bases and basic sequences}
Let $\XX$ be a quasi-Banach space and $J$ be a (finite or infinite) countable set. Given a sequence $(f_j)_{j\in J}$, a series $\sum_{j\in J} f_j$ converges unconditionally to $f\in \XX$, and write $f= \sum_{j\in J} f_j$ unconditionally, if either $J$ is finite and $f=\sum_{j\in J} f_j$ or $J$ is infinite and $f=\sum_{n=1}^\infty f_{\pi(n)}$ for every bijection $\pi\colon\NN\to J$.

A sequence $\BB=(\xx_j)_{j\in J}$ in $\XX$ is an \emph{unconditional basic sequence} if for every $f\in[\xx_j \colon j\in J]$ there is a unique family $(a_j)_{j\in J}$ in $\FF$ such that the series $\sum_{j\in J} a_j \, \xx_j$ converges unconditionally to $f$. Given $1\le C<\infty$ we say that $\BB$ is a $C$-unconditional basic sequence if $\xx_j\not=0$ for every $j\in J$ and there is a constant $C$ such that
\begin{equation*}
\left\Vert \sum_{j\in J} a_j \, \xx_j \right\Vert \le C \left\Vert \sum_{j\in J} b_j \, \xx_j \right\Vert
\end{equation*}
for every $(a_j)_{j\in J}$ and $(b_j)_{j\in J}$ in $c_{00}(J)$ such that $|a_j|\le |b_j|$ for all $j\in J$. It is well-known that $\BB$ is an unconditional basic sequence if and only it is a $C$-unconditional basic sequence for some $C$. The smallest constant $C\ge 1$ for which an unconditional basic sequence is $C$-unconditional will be called the \emph{unconditional constant} of $\BB$. Of course, a finite unconditional basis is nothing but a finite linearly independent family. All unconditional basic sequences are assumed to be infinite, unless otherwise stated. It is customary to index basic sequences with the set $\NN$ of natural numbers, and to index finite basic sequences with subsets $\{1,\dots,m\}$ of $\NN$. We prefer to use an arbitrary countable set $J$ to emphasize that, when dealing with unconditional basic sequences, it is unnecessary to arrange the sequences in a particular way.

The following lemmas just follow from the universal property of Banach envelopes (see, e.g., \cite{AABW2019}*{Proposition 9.9}). We write them down for further reference.

\begin{lemma}\label{lem:envelope} If $\BB$ is a semi-normalized complemented unconditional basic sequence of a quasi-Banach space $\XX$ then $\widehat{\BB}$ is a semi-normalized complemented unconditional basic sequence of $\widehat{\XX}$.
\end{lemma}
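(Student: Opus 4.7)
The plan is to exploit the universal property of the Banach envelope three times: to lift the coordinate functionals, the sign-change multipliers, and the projection associated with $\BB$ up to the envelope $\widehat{\XX}$. Write $\BB=(\xx_j)_{j\in J}$, let $C$ be its unconditional constant, fix a bounded projection $P\colon\XX\to[\BB]$, and denote by $(\xx_j^*)_{j\in J}$ the biorthogonal functionals of $\BB$ on $[\BB]$, which are uniformly bounded thanks to semi-normalization. The recurring device will be that any bounded linear operator $T\colon[\BB]\to\YY$ with Banach target $\YY$ can be precomposed with $P$ to yield a bounded operator $T\circ P\colon\XX\to\YY$, which by the universal property lifts to $\widehat{T\circ P}\colon\widehat{\XX}\to\YY$ with the same norm bound.

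For the semi-normalization of $\widehat{\BB}$, the contraction property of $J_\XX$ gives the upper bound $\|J_\XX(\xx_j)\|\le\|\xx_j\|$, while the lower estimate follows by applying the above device to each $\xx_j^*$: the lifted form $\widehat{\xx_j^*\circ P}\colon\widehat{\XX}\to\FF$ satisfies $\widehat{\xx_j^*\circ P}(J_\XX(\xx_k))=\delta_{j,k}$, with norm controlled by $\|\xx_j^*\|\,\|P\|$. This simultaneously provides a uniform lower bound for $\|J_\XX(\xx_j)\|$ and the linear independence of the family $\widehat{\BB}$.

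To prove unconditionality, fix a scalar family $\lambda=(\lambda_j)_{j\in J}$ with $\sup_j|\lambda_j|\le 1$. The multiplier $M_\lambda\colon[\BB]\to[\BB]$ given by $\xx_j\mapsto\lambda_j\xx_j$ has norm at most $C$, so applying the universal property to $J_\XX\circ M_\lambda\circ P\colon\XX\to\widehat{\XX}$ produces $\widetilde M_\lambda\colon\widehat{\XX}\to\widehat{\XX}$ of norm at most $C\|P\|$ acting on $\widehat{\BB}$ as multiplication by $\lambda$. Given finitely supported coefficients with $|a_j|\le|b_j|$, choosing $\lambda_j=a_j/b_j$ when $b_j\neq 0$ and $\lambda_j=0$ otherwise yields $\widetilde M_\lambda\bigl(\sum_j b_j J_\XX(\xx_j)\bigr)=\sum_j a_j J_\XX(\xx_j)$, hence the defining $C\|P\|$-unconditionality inequality for $\widehat{\BB}$; combined with the nonvanishing of its terms, the characterization recalled in the preliminaries promotes $\widehat{\BB}$ to an unconditional basic sequence of $\widehat{\XX}$.

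Finally, for complementation, a last application of the universal property to $J_\XX\circ P\colon\XX\to\widehat{\XX}$ produces $\widehat{P}\colon\widehat{\XX}\to\widehat{\XX}$ with $\widehat{P}\circ J_\XX=J_\XX\circ P$. Since $\widehat{P}(J_\XX(\xx_j))=J_\XX(\xx_j)$ and $\widehat{P}$ sends the dense subspace $J_\XX(\XX)$ into the closed subspace $[\widehat{\BB}]$ (which contains $J_\XX([\BB])$ by continuity), it follows that $\widehat{P}(\widehat{\XX})\subseteq[\widehat{\BB}]$ and that $\widehat{P}$ restricts to the identity on $[\widehat{\BB}]$, so $\widehat{P}$ is the required bounded projection. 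I anticipate no deep obstacle here; the only subtlety worth flagging is that the operators initially living on $[\BB]$ must first be promoted to operators on $\XX$ by composition with $P$ before the universal property, which demands a map defined on all of $\XX$, can be invoked.
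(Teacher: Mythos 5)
Your argument is correct and follows exactly the route the paper intends: the paper gives no written proof, merely noting that the lemma ``just follow[s] from the universal property of Banach envelopes'' (citing \cite{AABW2019}*{Proposition 9.9}), and your three applications of that property --- after precomposing with the projection $P$ so the operators are defined on all of $\XX$ --- are a faithful and complete execution of that plan. No gaps.
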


\begin{lemma}\label{lem:envelopebis} If $\YY$ is a complemented subspace of a quasi-Banach space $\XX$, then $J_\XX|_\YY \colon \YY \to \widehat{\XX}$ is an envelope map.
\end{lemma}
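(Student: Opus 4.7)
The plan is to exploit the functoriality of the Banach envelope construction encoded in its universal property. Let $i\colon \YY \hookrightarrow \XX$ denote the inclusion and let $P\colon \XX \to \YY$ be a bounded linear projection onto $\YY$, which exists because $\YY$ is complemented. After rescaling so that all the maps involved are contractions, the universal property of the Banach envelope applied to $J_\XX \circ i\colon \YY \to \widehat{\XX}$ produces a unique linear contraction $\widehat{i}\colon \widehat{\YY} \to \widehat{\XX}$ with $\widehat{i}\circ J_\YY = J_\XX \circ i$; this $\widehat{i}$ is precisely the map $\widehat{J_\XX|_\YY}$ whose properties we need to analyze. Similarly, $J_\YY \circ P\colon \XX \to \widehat{\YY}$ factors uniquely through $\widehat{\XX}$ to yield a contraction $\widehat{P}\colon \widehat{\XX} \to \widehat{\YY}$ with $\widehat{P}\circ J_\XX = J_\YY \circ P$.

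Composing these two factorizations, I would compute
\[
\widehat{P}\circ \widehat{i}\circ J_\YY = \widehat{P}\circ J_\XX \circ i = J_\YY \circ P \circ i = J_\YY,
\]
using $P\circ i = \Id_\YY$ in the last step. On the other hand, $\Id_{\widehat{\YY}}\circ J_\YY = J_\YY$ as well, so the uniqueness clause of the universal property forces $\widehat{P}\circ \widehat{i} = \Id_{\widehat{\YY}}$. Hence $\widehat{i}$ has a bounded linear left inverse, which means $\widehat{i} = \widehat{J_\XX|_\YY}$ is an isomorphic embedding of $\widehat{\YY}$ into $\widehat{\XX}$. By the definition of envelope map recalled just before the statement, this is exactly the conclusion.

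The argument is essentially a diagram chase, so there is no real obstacle; the only point requiring mild attention is the rescaling needed to pass from boundedness to contractivity, since the universal property as stated in the excerpt is phrased in terms of contractions, while $i$ and $P$ are only a priori bounded. This is a harmless cosmetic adjustment because replacing $J_\XX$ by a positive scalar multiple does not affect whether the induced map on $\widehat{\YY}$ is an isomorphic embedding, and the factorizations of $J_\XX \circ i$ and $J_\YY\circ P$ through the respective envelopes persist for any bounded linear maps (with contraction constants rescaled accordingly). Once this is observed, the proof is complete in a few lines as above.
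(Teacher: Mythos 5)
Your argument is correct and follows exactly the route the paper intends: the paper offers no written proof of this lemma, stating only that it ``just follows from the universal property of Banach envelopes,'' and your diagram chase (factoring the inclusion and the projection through the envelopes and invoking uniqueness to get $\widehat{P}\circ\widehat{i}=\Id_{\widehat{\YY}}$, so that $\widehat{J_\XX|_\YY}$ admits a bounded left inverse) is the standard way to carry that out. Your remark on rescaling to contractions is the right way to handle the only technical wrinkle, since uniqueness for merely bounded factorizations follows from the contractive case by dividing by $\max(1,\Vert S\Vert)$.
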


\begin{lemma}\label{lem:envelopetris} If $\XX_1$ and $\XX_2$ are quasi-Banach spaces, then
\[
(J_{\XX_1}, J_{\XX_2}) \colon \XX_1\oplus\XX_2 \to \widehat{\XX_1}\oplus\widehat{\XX_2}
\]
is an envelope map.
\end{lemma}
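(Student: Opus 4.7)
The plan is to establish the slightly stronger statement that the associated contraction $\widehat{J}\colon\widehat{\XX_1\oplus\XX_2}\to\widehat{\XX_1}\oplus\widehat{\XX_2}$ of $J:=(J_{\XX_1},J_{\XX_2})$ is an onto isomorphism, which is more than enough to conclude that $J$ is an envelope map. To this end I would exhibit an explicit two-sided inverse to $\widehat{J}$.

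First I would record that, because the direct-sum quasi-norm is the supremum of the coordinate quasi-norms, the canonical inclusions $\iota_i\colon\XX_i\to\XX_1\oplus\XX_2$ and the product map $J$ are all linear contractions into the appropriate Banach spaces. The universal property of $\widehat{\XX_1\oplus\XX_2}$ then produces the contraction $\widehat{J}$ with $\widehat{J}\circ J_{\XX_1\oplus\XX_2}=J$. Dually, applying the universal property of each $\widehat{\XX_i}$ to the contraction $J_{\XX_1\oplus\XX_2}\circ\iota_i\colon\XX_i\to\widehat{\XX_1\oplus\XX_2}$ yields contractions $T_i\colon\widehat{\XX_i}\to\widehat{\XX_1\oplus\XX_2}$ satisfying $T_i\circ J_{\XX_i}=J_{\XX_1\oplus\XX_2}\circ\iota_i$. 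Patching them together, the formula $S(y_1,y_2):=T_1(y_1)+T_2(y_2)$ defines a bounded linear map $S\colon\widehat{\XX_1}\oplus\widehat{\XX_2}\to\widehat{\XX_1\oplus\XX_2}$.

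To verify that $S$ and $\widehat{J}$ are mutual inverses I would proceed as follows. For $S\circ\widehat{J}=\Id$, a direct computation using $(x_1,x_2)=\iota_1(x_1)+\iota_2(x_2)$ together with the defining relations of $\widehat{J}$ and of the $T_i$ gives $(S\circ\widehat{J})\circ J_{\XX_1\oplus\XX_2}=J_{\XX_1\oplus\XX_2}$, so the uniqueness clause in the universal property of $\widehat{\XX_1\oplus\XX_2}$ forces $S\circ\widehat{J}$ to be the identity. For $\widehat{J}\circ S=\Id$, the analogous computation produces the identity on pairs of the form $(J_{\XX_1}(x_1),J_{\XX_2}(x_2))$; since $J_{\XX_i}(\XX_i)$ is dense in $\widehat{\XX_i}$ (a standard consequence of the universal property: otherwise quotienting by the closure of the range would yield two different factorizations of the zero map), this subspace is dense in $\widehat{\XX_1}\oplus\widehat{\XX_2}$ and continuity finishes the argument. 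No serious obstacle stands out: the whole proof is a routine diagram chase, and the only mildly subtle ingredient is the density of the range of the envelope map, which is a standard generality that can be quoted or dispatched in one line.
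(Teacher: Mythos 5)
Your argument is correct and is essentially the paper's own: the paper gives no proof beyond asserting that the lemma ``just follows from the universal property of Banach envelopes,'' and your diagram chase is precisely that argument spelled out. The only point worth tightening is that $S\circ\widehat{J}$ is a priori only bounded (norm at most $2$), not a contraction, so the identity $S\circ\widehat{J}=\Id$ should be deduced from continuity plus the density of $J_{\XX_1\oplus\XX_2}(\XX_1\oplus\XX_2)$ rather than from the uniqueness clause verbatim --- but you already supply exactly that density fact for the other composite, so this is cosmetic.
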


An \emph{unconditional basis} of a quasi-Banach space $\XX$ is an unconditional basic sequence whose closed linear span is $\XX$. Given a basis $\BB=(\xx_j)_{j\in J}$ of $\XX$ there is unique sequence $\BB^*=(\xx_j^*)_{j\in J}$ in $\XX^*$ such that $f=\sum_{j\in J} \xx_j^*(f) \, \xx_j$ for every $f\in\XX$. The \emph{support} of $f=\sum_{j\in J}a_j\, \xx_j\in\XX$ with respect to $\BB$ is the set $\supp(f)=\{j\in J\colon a_j\not=0\}$, and the support of $f^*\in\XX^*$ with respect to $\BB$ is the set $\supp(f^*)=\{j\in J\colon f^*(\xx_j)\not=0\}$. Associated to an unconditional basis $\BB$ of $\XX$, for each $\gamma=(\gamma_j)_{j\in J}\in \ell_\infty(J)$ there is a bounded linear map $S_\gamma\colon\XX\to\XX$ given by
\[
S_\gamma(f)=\sum_{j\in J} \gamma_j\,\xx_j^*(f) \, \xx_j, \quad f\in \XX.
\]
In case that $\gamma$ is the indicator function of a set $A\subseteq J$ we put $S_A:=S_\gamma$ and say that the operator $S_A$ is the \textit{coordinate projection} on $A$.

Suppose that $F$ is a finite set of indices and that $\BB_i=(\xx_{i,j})_{j\in J_i}$ is an unconditional basis of a quasi-Banach space $\XX_i$ for $i\in F$. Then the sequence
\[\bigoplus_{i\in F} \BB_i :=(\xx_{i,j})_{(i,j)\in \cup_{i\in F} \{i\} \times J_i }\subset \bigoplus_{i\in F} \XX_i\] defined by
\[
\xx_{i,j} =(\xx_{i,j,k})_{k\in F}, \text{ where } \xx_{i,j,k}=\begin{cases} \xx_j& \text{ if }k=i, \\ 0 & \text{ otherwise,} \end{cases}
\]
is an unconditional basis of $\bigoplus_{i\in F} \XX_i$. If $F=\{1,\dots,N\}$ and $\XX_i=\XX$ and $\BB_i=\BB$ for all $i\in F$, we put $\BB^N=\bigoplus_{i\in F} \BB_i$ and say that $\BB^N$ is the $N$-fold product of $\BB$.

A \emph{subbasis} of a basis $\BB=(\xx_j)_{j\in J}$ of a quasi-Banach space $\XX$ is a sequence of the form $(\xx_j)_{j\in I}$ for some $I\subseteq J$. A \emph{block basic sequence} with respect to $\BB$ is a sequence $(\uu_i)_{i\in G}$ in $\XX$ such that $(\supp(\uu_i))_{i\in G}$ is a family of pairwise disjoint finite sets. A block basic sequence of an unconditional basis is an unconditional basic sequence, and the subbases of $\BB$ are in particular block basic sequences.

\subsection*{Domination and equivalence of sequences}
Let $\BB_0=(\xx_j)_{j\in J}$ and $\BB_1=(\yy_i)_{i\in I}$ be two sequences in quasi-Banach spaces $\XX$ and $\YY$ respectively. We say that $\BB_0$ \emph{dominates a permutation} of $\BB_1$ if there are a bounded linear map $T\colon[\BB_0]\to \BB_1]$ and a bijection $\pi\colon J\to I$ such that $T(\xx_j)=\yy_{\pi(j)}$ for all $j\in J$. In the case when $\pi$ is the identity map we say that $\BB_0$ dominates $\BB_1$. If $T$ is an isomorphism from $[\BB_0]$ onto $[\BB_1]$ we say $\BB_0$ and $\BB_1$ are \emph{permutatively equivalent} (\emph{equivalent} if, moreover, $\pi$ is the identity map), and we write $\BB_0\sim\BB_1$. Note that if $\BB_0$ and $\BB_1$ are permutatively equivalent and $\BB_0$ is an unconditional basic sequence, then $\BB_1$ is an unconditional basic sequence.

An unconditional basis $\BB = (\xx_j)_{j=1}^\infty$ of a quasi-Banach space $\XX$ is said to be \emph{subsymmetric} if it is equivalent to $(\xx_{\phi(j)})_{j=1}^\infty$ for every increasing map $\phi\colon\NN\to\NN$. For example, the canonical basis of $\ell_{p}$, which we denote by $\EE_{\ell_{p}}$ from now on, is subsymmetric (in fact it is \emph{symmetric}, see \cites{KaPe1961,Singer1962}).

Let us record a very simple, yet useful, property.

\begin{lemma}\label{lem:sumSSK3} Suppose that $|F|<\infty$ and that for each $i\in F$, $\BB_i$ is a subsymmetric basis in a quasi-Banach space $\XX_{i}$. Then every subbasis of $\bigoplus_{i\in F} \BB_i$ is permutatively equivalent to $\bigoplus_{i\in G} \BB_i$ for some $G\subseteq F$.
\end{lemma}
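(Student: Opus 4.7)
The plan is to decompose the subbasis according to the direct-sum structure, apply the subsymmetry of each $\BB_i$ to the pieces indexed by infinite sets, and absorb any finite residue into one fixed infinite summand. Write the subbasis as $(\xx_{i,j})_{(i,j)\in I}$ with $I\subseteq\bigcup_{i\in F}\{i\}\times J_i$, and for each $i\in F$ set $I_i=\{j\in J_i:(i,j)\in I\}$. Put $G=\{i\in F:I_i\text{ is infinite}\}$. Since $F$ is finite and every unconditional basic sequence is infinite by the convention fixed in the preliminaries, $G$ is nonempty. The claim is that the subbasis is permutatively equivalent to $\bigoplus_{i\in G}\BB_i$, and the proof will exhibit an explicit permutative equivalence $T$.

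Fix $i_0\in G$, let $k=\sum_{i\in F\setminus G}|I_i|$, and write $J_{i_0}=J_{i_0}'\cup J_{i_0}''$ as a disjoint union with $J_{i_0}'$ infinite and $|J_{i_0}''|=k$. Subsymmetry of $\BB_i$ produces order-preserving bijections $\sigma_i\colon I_i\to J_i$ for $i\in G\setminus\{i_0\}$ and $\sigma_{i_0}\colon I_{i_0}\to J_{i_0}'$ such that the assignment $\xx_{i,j}\mapsto\xx_{i,\sigma_i(j)}$ realizes an equivalence onto the corresponding (sub)basis of $\BB_i$. Choose any bijection $\tau$ from $\bigcup_{i\in F\setminus G}\{i\}\times I_i$ onto $J_{i_0}''$, and set $T(\xx_{i,j})=\xx_{i,\sigma_i(j)}$ for $i\in G$ and $T(\xx_{i,j})=\xx_{i_0,\tau(i,j)}$ for $i\in F\setminus G$. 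By construction, the images of $T$ run bijectively through the index set of $\bigoplus_{i\in G}\BB_i$, so it only remains to check that $T$ extends to an isomorphism of the closed linear spans.

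To do so, compare quasi-norms summand by summand. For each $i\in G\setminus\{i_0\}$, the $i$-th coordinate of $T(f)$ is quasi-norm equivalent to the $i$-th coordinate of $f$ by subsymmetry of $\BB_i$. The $i_0$-th coordinate of $T(f)$ decomposes, by construction, into two parts with disjoint supports in $J_{i_0}'$ and $J_{i_0}''$; the unconditionality of $\BB_{i_0}$ makes their sum equivalent to the maximum of their quasi-norms. The part supported on $J_{i_0}'$ is controlled via subsymmetry by the $i_0$-th coordinate of $f$, while the part supported on $J_{i_0}''$ lives in the $k$-dimensional space $[\xx_{i_0,j}:j\in J_{i_0}'']$, where all quasi-norms are equivalent, and hence it is equivalent to the residue contribution $\max_{i\in F\setminus G}\|\sum_{j\in I_i}a_{i,j}\xx_{i,j}\|_{\XX_i}$ from the pre-image. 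Maximizing over $i\in G$ exactly recovers $\|f\|$. The main obstacle is precisely this finite-residue bookkeeping: residues coming from several distinct $\XX_i$'s must be collapsed into a single finite-dimensional slice of $\XX_{i_0}$ without damaging the quasi-norm equivalence. Subsymmetry supplies the infinite ``room'' $J_{i_0}'\subseteq J_{i_0}$ needed to keep $\BB_{i_0}$ itself up to equivalence, and the universal equivalence of quasi-norms in a fixed finite dimension makes the isomorphism on the residue slice automatic. Once this is set up, the verification above is routine.
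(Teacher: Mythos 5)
Your proof is correct and follows essentially the same route as the paper's: split the subbasis along the summands, let $G$ collect the indices with infinite intersection, use subsymmetry to identify each infinite piece with the full $\BB_i$, and absorb the finite residue into one fixed infinite summand $\BB_{i_0}$ via subsymmetry plus the equivalence of all quasi-norms in finite dimensions. The only difference is that you carry out the index bookkeeping and norm estimates explicitly where the paper argues at the level of equivalences of bases ($\BB_g\sim\BB'\oplus\BB_g$), but the underlying ideas are identical.
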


\begin{proof}Let $\BB_0$ be a subbasis of $\bigoplus_{i\in I} \BB_i$ and write $\BB_0=\bigoplus_{i\in F} \BB_{i,0}$, where $\BB_{i,0}$ is a (finite or infinite) subbasis of $\BB_i$, $i\in F$.
The set
\[
G=\{ i \in F \colon \BB_{i,0} \text{ is infinite}\}
\]
is clearly nonempty, and $\BB_0\sim \BB' \oplus ( \bigoplus_{i\in G} \BB_{i})$ for some finite basis $\BB'$. Pick $g\in G$. Dropping a finite number of elements from $\BB_g$, we infer that $\BB_g\sim \BB'\oplus \BB_g$. Therefore, $\BB_0\sim \bigoplus_{i\in G} \BB_{i}$.
\end{proof}

\subsection*{Lattice structure and convexity conditions}
We are interested in quasi-Banach lattices of functions defined on a countable set $J$, i.e., a quasi-Banach space $\LL\subseteq \FF^J$ such that $\ee_j=(\delta_{i,j})_{i\in J}\in \LL$ for every $j\in J$, and whenever $f\in \LL$ and $g\in\FF^J$ satisfy $|g|\le |f|$ we have $g\in\LL$ and $\Vert g\Vert \le \Vert f \Vert$. In this case, the unit vector system $\EE_J=(\ee_j)_{j\in J}$ is a $1$-unconditional basic sequence for $\LL$. If, additionally, $\EE_J$ is normalized we will say that $\LL$ is a \emph{sequence space}. Recall that a quasi-Banach lattice $\LL$ is said to be \emph{$p$-convex} (resp., \emph{$q$-concave}), where $0<p\le \infty$ (resp. $0< q\le \infty$) if there is a constant $M>0$ such that for any $x_{1},\dots,x_{m}\in\LL$ and $m\in{\NN}$ we have
\begin{equation}\label{latticedef}
\left\Vert\left(\sum_{n=1}^{m}\vert x_{n}\vert^{p}\right)^{1/p}\right\Vert \le M
\left(\sum_{n=1}^{m}\Vert x_{n}\Vert^{p}\right)^{1/p}\end{equation}
(resp.,
\begin{equation}\label{convavcond}
\left(\sum_{n=1}^{m}\Vert x_{n}\Vert^{q}\right)^{1/q} \le M \left\Vert\left(\sum_{n=1}^{m}\vert x_{n}\vert^{q}\right)^{1/q}\right\Vert.)
\end{equation}
The general procedure to define the element $(\sum_{n=1}^{m}\vert x_{n}\vert^{p})^{1/p}\in\LL$ is described in \cite{LinTza1977II}*{pp. 40-41}. However, when our lattice $\LL$ is a sequence space, so that $x_{n}=(a_{j,n})_{j\in J}$ for $n=1,\dots, m$, we have
\[
\left(\sum_{n=1}^{m}\vert x_{n}\vert^{p}\right)^{1/p} = \left(\left( \sum_{n=1}^m |a_{j,n}|^p\right)^{1/p}\right)_{j\in J},
\]
and so \eqref{latticedef} and \eqref{convavcond} take a more workable form.

Every semi-normalized unconditional basis $\BB$ of a quasi-Banach space $\XX$ becomes normalized and $1$-unconditional after a suitable renorming of $\XX$, so that we can associate a sequence space to $\BB$. An unconditional basis $\BB$ will be said to be $p$-convex if its associated sequence space is. In general, we say that a semi-normalized unconditional basis has a property about lattices if its associated sequence space has it. And the other way around, i.e., we will say that a sequence space enjoys a certain property relevant to bases if its unit vector system does. A family of semi-normalized unconditional bases will be said to have a certain property if every basis in the family has it and all the constants involved (including the semi-normalization and the unconditionality ones) are uniformly bounded.

If a quasi-Banach lattice is locally convex as a quasi-Banach space, then it is $1$-convex as a quasi-Banach lattice. However, despite the fact that every quasi-Banach space is $p$-convex for some $0<p\le 1$, there exist quasi-Banach \emph{lattices} that are not $p$-convex for any $p$. Kalton introduced in \cite{Kalton1984} the concept of \emph{$L$-convex} lattice and showed that a quasi-Banach lattice is $L$-convex if and only if it is $p$-convex for some $p>0$.

\begin{lemma}\label{lem:lpconvexity} Let $A$ be a finite subset of $(0,\infty]$. Then the sequence space $\bigoplus_{p\in A} \ell_{p}$ is $q$-convex and $r$-concave, where $q=\min A$ and $r=\max A$.
\end{lemma}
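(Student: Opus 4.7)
The strategy is to establish the componentwise versions of both inequalities on each individual $\ell_p$, and then lift them to the direct sum $\XX:=\bigoplus_{p\in A}\ell_p$ by taking either a supremum or a sum over $p$. Writing $x=(x^{(p)})_{p\in A}$ and recalling that $\|x\|_\XX=\sup_{p\in A}\|x^{(p)}\|_{\ell_p}$, the key observation is that $(\sum_n|x_n|^s)^{1/s}$ is computed coordinatewise in $\XX$, so its $p$-th component equals $(\sum_n|x_n^{(p)}|^s)^{1/s}$.

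Step 1, the single-space estimates. Each $\ell_p$, including $c_0=\ell_\infty$, is $q$-convex with constant $1$ for every $q\le p$ and $r$-concave with constant $1$ for every $r\ge p$. The $q$-convexity follows from the identity $\|(\sum_n|x_n|^q)^{1/q}\|_{\ell_p}^q=\|\sum_n|x_n|^q\|_{\ell_{p/q}}$ combined with the triangle inequality in $\ell_{p/q}$, which is legal since $p/q\ge 1$. The $r$-concavity is the discrete mixed-norm inequality: setting $b_{n,j}=|x_n(j)|^p$ and $s=r/p\ge 1$, it reduces to Minkowski's inequality in $\ell_s$,
\[
\left(\sum_n\Bigl(\sum_j b_{n,j}\Bigr)^{s}\right)^{1/s}\le\sum_j\Bigl(\sum_n b_{n,j}^{s}\Bigr)^{1/s}.
\]
The cases involving $p=\infty$, $q=\infty$, or $r=\infty$ reduce to trivial interchanges of sums and suprema.

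Step 2, lifting to $\XX$. For $q=\min A$, take the supremum over $p\in A$ in the componentwise convexity inequality and use $\|x_n^{(p)}\|_{\ell_p}\le\|x_n\|_\XX$:
\[
\left\|\left(\sum_n|x_n|^q\right)^{1/q}\right\|_\XX=\sup_{p\in A}\left\|\Bigl(\sum_n|x_n^{(p)}|^q\Bigr)^{1/q}\right\|_{\ell_p}\le\sup_{p\in A}\Bigl(\sum_n\|x_n^{(p)}\|_{\ell_p}^q\Bigr)^{1/q}\le\Bigl(\sum_n\|x_n\|_\XX^q\Bigr)^{1/q},
\]
giving $q$-convexity with constant $1$. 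For $r=\max A$, a supremum over $|A|$ nonnegative numbers is bounded by their sum, so
\[
\sum_n\|x_n\|_\XX^r=\sum_n\sup_{p\in A}\|x_n^{(p)}\|_{\ell_p}^r\le\sum_{p\in A}\sum_n\|x_n^{(p)}\|_{\ell_p}^r\le\sum_{p\in A}\left\|\Bigl(\sum_n|x_n^{(p)}|^r\Bigr)^{1/r}\right\|_{\ell_p}^r\le|A|\cdot\left\|\Bigl(\sum_n|x_n|^r\Bigr)^{1/r}\right\|_\XX^r,
\]
where the middle inequality applies the componentwise $r$-concavity, available since $r\ge p$ for every $p\in A$. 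This gives $r$-concavity with constant at most $|A|^{1/r}$.

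There is no real obstacle: the entire argument is standard Minkowski-style bookkeeping. The only subtle point is ensuring that both exponent ratios $p/q$ and $r/p$ are at least $1$, which is exactly the role played by the hypotheses $q=\min A$ and $r=\max A$; the finite cardinality of $A$ is absorbed harmlessly into the concavity constant.
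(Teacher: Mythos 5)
Your proof is correct and follows essentially the same route as the paper's: establish $q$-convexity and $r$-concavity of each summand $\ell_p$ via Minkowski's inequality, then observe that these properties pass to the finite direct sum. The paper states the lifting step without proof, whereas you carry it out explicitly (with the sharp constant $|A|^{1/r}$ for concavity), but the underlying argument is the same.
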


\begin{proof} Minkowski's inequality yields that for any measure $\mu$, the lattice $L_p(\mu)$ is $r$-convex and $q$-concave for $0<q\le p \le r\le\infty$. Now we just need to use that $q$-convexity and $r$-concavity are preserved under direct sums.
\end{proof}

Recall that two quasi-Banach spaces $\XX_1$ and $\XX_2$ are \emph{totally incomparable} if they have no infinite-dimensional subspaces in common (up to isomorphism). It is known \cites{Stiles1972,LinTza1977} that $\ell_p$ and $\ell_q$ are totally incomparable spaces if $0<p< q\le\infty$. Let us see what happens with the direct sums of $\ell_p$ spaces.

\begin{proposition}\label{prop:B} Suppose that $\bigoplus_{p\in A} \ell_p \approx \bigoplus_{p\in B} \ell_p$ for some finite subsets $A,B$ of $(0,\infty]$.
Then $A=B$.
\end{proposition}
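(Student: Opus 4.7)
By symmetry it suffices to show $A\subseteq B$. Fix $p\in A$ and suppose toward a contradiction that $p\notin B$. Composing the coordinate inclusion $\ell_p\hookrightarrow\bigoplus_{p'\in A}\ell_{p'}$ with the given isomorphism produces an isomorphic embedding $T\colon\ell_p\hookrightarrow \XX:=\bigoplus_{q\in B}\ell_q$, so that $\BB_0:=T(\EE_{\ell_p})$ is a semi-normalized, subsymmetric, unconditional basic sequence in $\XX$ equivalent to $\EE_{\ell_p}$. My plan is to show that the only way $\BB_0$ can sit inside $\XX$ is if $p\in B$.

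I would first upgrade $\BB_0$ to a block basic sequence with respect to the natural $1$-unconditional basis $\bigoplus_{q\in B}\EE_{\ell_q}$ of $\XX$. This is a Bessaga--Pe\l czy\'nski-type gliding-hump argument, and it is facilitated by the subsymmetry of $\EE_{\ell_p}$, which permits free passage to subsequences without affecting the equivalence class. Call the resulting block basic sequence $(\uu_n)$, and decompose $\uu_n=\sum_{q\in B}\uu_n^{(q)}$ into its $\ell_q$-components. Because the supports of $(\uu_n^{(q)})_n$ inside each $\ell_q$ are pairwise disjoint, the norm computes to
\[
\Bigl\Vert\sum_n a_n\,\uu_n\Bigr\Vert_{\XX}=\sup_{q\in B}\Bigl(\sum_n|a_n|^q c_{n,q}^q\Bigr)^{1/q},\qquad (a_n)\in c_{00},
\]
with $c_{n,q}:=\Vert\uu_n^{(q)}\Vert_{\ell_q}$ and $\sup_{q\in B}c_{n,q}\approx 1$.

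Finiteness of $B$ allows a diagonal extraction along which $c_{n,q}\to c_q$ for every $q\in B$, with $\sup_q c_q>0$. Invoking subsymmetry once more---evaluating the above identity on indices pushed far enough out that $c_{n,q}\approx c_q$ uniformly---one sees that $(\uu_n)$ is equivalent to a sequence whose norm is $\sup_{q\in B}c_q\,\Vert(a_n)\Vert_{\ell_q}$. Set $q_\ast:=\min\{q\in B:c_q>0\}$. Because $\Vert(a_n)\Vert_{\ell_q}\le\Vert(a_n)\Vert_{\ell_{q_\ast}}$ for $q\ge q_\ast$, the supremum collapses to $\approx \Vert(a_n)\Vert_{\ell_{q_\ast}}$, and consequently $(\uu_n)\sim\EE_{\ell_{q_\ast}}$.

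Combining the two equivalences $(\uu_n)\sim\EE_{\ell_p}$ and $(\uu_n)\sim\EE_{\ell_{q_\ast}}$ gives $\EE_{\ell_p}\sim\EE_{\ell_{q_\ast}}$; testing on $(a_n)=\mathbf{1}_{\{1,\dots,N\}}$ yields $N^{1/p}\approx N^{1/q_\ast}$ and hence $p=q_\ast\in B$, contradicting $p\notin B$. The main technical point is the joint use of gliding-hump reduction and subsymmetric limit-taking: the former turns an abstract embedding into a concrete block structure, and the latter converts the asymptotic $c_{n,q}\to c_q$ into a bona fide norm equivalence on all of $c_{00}$. Both steps require care in the quasi-Banach lattice setting, but both are standard in spirit; the finiteness of $B$ and the subsymmetry of $\EE_{\ell_p}$ are what make the argument go through.
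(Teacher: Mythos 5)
Your argument is correct in outline, but it takes a genuinely different route from the paper. The paper exploits complementation: since $\ell_p$ is a complemented summand of $\bigoplus_{p'\in A}\ell_{p'}$, the isomorphism makes it a complemented subspace of $\bigoplus_{q\in B}\ell_q$, and Orty\'nski's Proposition~1.3 then splits $\ell_p$ as $\bigoplus_{q\in B}\XX_q$ with each $\XX_q$ complemented in $\ell_q$; picking an infinite-dimensional $\XX_q$ and invoking the known total incomparability of $\ell_p$ and $\ell_q$ for $p\ne q$ forces $p=q\in B$. You instead use only that $\ell_p$ \emph{embeds} into $\bigoplus_{q\in B}\ell_q$, and you identify the index by a hands-on gliding-hump and block-norm computation --- essentially the same technique the paper itself deploys in the proof of Lemma~\ref{lem:TI}, pushed further to pin down the exact index $q_\ast$ rather than just the convexity/concavity exponents. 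What your approach buys is self-containedness (no appeal to Orty\'nski's splitting result or to the Stiles/Lindenstrauss--Tzafriri incomparability theorem, both of which your computation in effect reproves) and a formally stronger conclusion (non-embeddability rather than non-complementability); what the paper's approach buys is brevity. Two points in your write-up deserve more care. First, the gliding-hump step needs the images to tend to zero coordinatewise, which $T(\ee_n)$ need not do; the standard fix (used in Lemma~\ref{lem:TI}) is to extract a pointwise convergent subsequence and pass to the differences $T(\ee_{2n}-\ee_{2n-1})$, which are still equivalent to $\EE_{\ell_p}$ by subsymmetry. Second, for indices $q$ with $c_q=0$ it is not enough to take a tail on which $c_{n,q}\approx c_q$ uniformly: if $q<q_\ast$ the factor $\Vert(a_n)\Vert_{\ell_q}$ dominates $\Vert(a_n)\Vert_{\ell_{q_\ast}}$, so uniform smallness of the $c_{n,q}$ does not by itself kill that term. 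You must instead pass to a subsequence along which $\sum_n c_{n,q}^{q}<\varepsilon^q$ (so that $(\sum_n|a_n|^qc_{n,q}^q)^{1/q}\le\varepsilon\Vert(a_n)\Vert_{\ell_\infty}\le\varepsilon\Vert(a_n)\Vert_{\ell_{q_\ast}}$), which subsymmetry again permits. With these repairs the proof is complete.
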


\begin{proof}Let $p\in A$. Applying \cite{Ortynski1981}*{Proposition 1.3} yields $(\XX_q)_{q\in B}$ such that $\XX_q$ is a complemented subspace of $\ell_q$ for every $q\in B$ and $\ell_p\approx \bigoplus_{q\in B} \XX_q$. Pick $q\in B$ such that $\XX_q$ is infinite-dimensional. Then $\ell_q$ and $\ell_p$ are not totally incomparable, and so $p=q$. We have proved that $A\subseteq B$. Switching the roles of $A$ and $B$ we obtain $B\subseteq A$.
\end{proof}

We will take advantage of the lattice structure to delve a bit deeper into the concept of totally incomparable quasi-Banach spaces.

\begin{lemma}\label{lem:TI} Let $\LL_1$ and $\LL_2$ be sequence spaces. Suppose that $\LL_1$ is $r$-convex, that $\LL_2$ is $q$-convex and that $q<r$. Then $\LL_1$ and $\LL_2$ are totally incomparable.
\end{lemma}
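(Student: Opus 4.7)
The plan is a contradiction argument: assume $\LL_1$ and $\LL_2$ share a common infinite-dimensional closed subspace $\XX$, and derive the scalar inequality
\[
\Bigl(\sum_n|a_n|^q\Bigr)^{1/q}\lesssim\Bigl(\sum_n|a_n|^r\Bigr)^{1/r},\qquad (a_n)\in c_{00}(\NN),
\]
which fails whenever $q<r$ (test with $a_n=1$ for $n\le N$ and zero otherwise to obtain $N^{1/q}\lesssim N^{1/r}$, absurd as $N\to\infty$).

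The strategy is to produce in $\XX$ a seminormalized basic sequence whose images in $\LL_1$ and in $\LL_2$ are both equivalent to disjointly supported block basic sequences of the respective canonical bases. Starting from any seminormalized basic sequence $(\xx_n)_{n=1}^{\infty}\subset\XX$, I apply a Bessaga--Pe\l czy\'nski-type selection principle first with respect to $\EE_{J_1}$ in $\LL_1$, and then, to the already chosen subsequence, with respect to $\EE_{J_2}$ in $\LL_2$. Because both canonical bases are $1$-unconditional, the underlying gliding-hump argument works verbatim in the quasi-Banach lattice setting. After the two refinements I obtain seminormalized disjointly supported sequences $(\uu_n)\subset\LL_1$ and $(\vv_n)\subset\LL_2$ with $(\uu_n)\sim(\xx_n)\sim(\vv_n)$.

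The lattice hypotheses then translate into matching scalar estimates on the coefficients. Since the $\uu_n$ are pairwise disjoint in $\LL_1$, the pointwise identity $|\sum_n a_n\uu_n|=(\sum_n|a_n\uu_n|^r)^{1/r}$ combined with the $r$-convexity inequality \eqref{latticedef} of $\LL_1$ yields the upper $r$-estimate
\[
\Bigl\Vert\sum_n a_n\uu_n\Bigr\Vert_{\LL_1}\lesssim\Bigl(\sum_n|a_n|^r\Bigr)^{1/r}.
\]
Analogously, applying \eqref{convavcond} at exponent $q$ to the disjointly supported $(\vv_n)\subset\LL_2$ produces the lower $q$-estimate
\[
\Bigl(\sum_n|a_n|^q\Bigr)^{1/q}\lesssim\Bigl\Vert\sum_n a_n\vv_n\Bigr\Vert_{\LL_2}.
\]
Chaining these two estimates through the equivalence $(\uu_n)\sim(\vv_n)$ gives the announced scalar inequality, completing the contradiction.

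The main obstacle lies in the first step: the compatible block-extraction, which must simultaneously control the embeddings of a single subsequence into two different quasi-Banach lattices. For Banach lattices this is routine via Bessaga--Pe\l czy\'nski; for sequence spaces the $1$-unconditionality of the canonical basis keeps the gliding-hump argument available in the quasi-Banach setting as well. The remaining manipulations are direct applications of the defining inequalities.
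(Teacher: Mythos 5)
Your proposal follows essentially the same route as the paper: starting from a common infinite-dimensional subspace, extract one sequence whose images in $\LL_1$ and $\LL_2$ are simultaneously equivalent to disjointly supported block basic sequences of the respective unit vector systems, and then play an upper $r$-estimate in $\LL_1$ against a lower $q$-estimate in $\LL_2$ to reach the impossible inequality $\bigl(\sum_n|a_n|^q\bigr)^{1/q}\lesssim\bigl(\sum_n|a_n|^r\bigr)^{1/r}$. Two points in your write-up deserve attention. The first concerns the lower $q$-estimate: you obtain it by ``applying \eqref{convavcond} at exponent $q$,'' i.e.\ you use that $\LL_2$ is $q$-\emph{concave}, whereas the stated hypothesis is that $\LL_2$ is $q$-\emph{convex}. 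For disjointly supported vectors, $q$-convexity only gives an \emph{upper} $q$-estimate, which yields no contradiction against the upper $r$-estimate; and indeed the lemma is false as literally stated (the space $c_0$ is simultaneously $1$-convex and $2$-convex, yet it is not totally incomparable with itself). The statement must be read with ``$q$-concave'' in place of ``$q$-convex''; the paper's own displayed chain makes the same tacit substitution, and the application to $\XX_{\Klt}=\bigoplus_{p\in A_{\Klt}}\ell_p$, which is $(\max A_{\Klt})$-concave with $\max A_{\Klt}\le 1<\min A_{\Cvx}$, confirms this is the intended hypothesis. So you inherited a typo rather than introduced an error, but a careful proof should flag that the hypothesis actually being used is concavity.

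The second point is the extraction step, which you acknowledge as ``the main obstacle'' but then dispatch too quickly. A Bessaga--Pe\l czy\'nski selection requires the images $T_i(\xx_n)$ to be coordinatewise null, not merely seminormalized; starting from ``any seminormalized basic sequence'' this is simply not available (consider images of the form $\ee_1+\ee_{n+1}$). Moreover, producing a basic sequence inside an arbitrary infinite-dimensional quasi-Banach subspace is itself a nontrivial matter. The paper sidesteps both issues: it takes a $\delta$-separated sequence $(f_n)$ in the unit ball via the Riesz-type Lemma~\ref{lem:net} (no basic sequence needed), passes by a Cantor diagonal argument to a subsequence whose images converge coordinatewise in \emph{both} lattices, and only then forms the differences $\yy_n=f_{2n-1}-f_{2n}$, which are seminormalized and coordinatewise null in both lattices, so that the gliding hump applies. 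Your argument is repairable along exactly these lines, but as written the first step would fail.
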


To prove this we need an auxiliary result.
\begin{lemma}\label{lem:net} Let $\XX$ be an infinite dimensional quasi-Banach space. For any $0<\delta<1$ there is $(f_n)_{n=1}^\infty$ in the open unit ball $B_{\XX}$ of $\XX$ with $\Vert f_n-f_m\Vert>\delta$ for all $n\not=m$ in $\NN$.
\end{lemma}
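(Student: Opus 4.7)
The plan is to adapt Riesz's lemma to the quasi-Banach setting and then rescale slightly so that the resulting sequence lies in the \emph{open} unit ball while pairwise distances remain bounded below by $\delta$.

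First I would verify that Riesz's lemma transfers from the normed to the quasi-normed context: for any closed proper subspace $Y$ of a quasi-Banach space $\XX$ and any $0<\theta<1$, there exists $u \in \XX$ with $\Vert u\Vert=1$ and $\Vert u-y\Vert \ge \theta$ for all $y\in Y$. The classical proof carries over verbatim, relying only on the homogeneity $\Vert \lambda x\Vert = |\lambda|\,\Vert x\Vert$ and on the topological fact that $d(z,Y):=\inf_{y\in Y} \Vert z-y\Vert >0$ whenever $Y$ is closed and $z\notin Y$. The triangle-constant $\kappa$ of the quasi-norm plays no role in the argument, since the estimate is obtained from a single subtraction of the form $u-y$ after a homogeneous normalization.

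With this tool in hand, I would fix $\theta$ and $s$ with $\delta<\theta<1$ and $\delta/\theta < s < 1$ (possible because $\delta<\theta$), and then build unit vectors $(u_n)_{n=1}^\infty$ inductively. Start with any $u_1\in\XX$ of norm one. Given $u_1,\dots,u_n$, the subspace $Y_n:=[u_1,\dots,u_n]$ is finite-dimensional, hence closed, and is a proper subspace of $\XX$ by the infinite-dimensionality hypothesis; applying the quasi-normed Riesz lemma to $Y_n$ produces $u_{n+1}$ with $\Vert u_{n+1}\Vert=1$ and $\Vert u_{n+1}-y\Vert\ge \theta$ for every $y\in Y_n$. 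In particular, whenever $n>m$ one has $u_m\in Y_{n-1}$, so $\Vert u_n-u_m\Vert\ge \theta$.

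The final step is to set $f_n:=s\,u_n$. Since $\Vert f_n\Vert = s<1$, the sequence $(f_n)$ lies in $B_\XX$, and for $n\neq m$,
\[
\Vert f_n-f_m\Vert \;=\; s\,\Vert u_n-u_m\Vert \;\ge\; s\theta \;>\; \delta
\]
by the choice of $s$. The only substantive point is the extension of Riesz's lemma to the quasi-normed setting, and since its proof uses nothing beyond homogeneity and closedness of $Y$, I do not anticipate any real obstacle.
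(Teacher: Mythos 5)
Your proof is correct and follows essentially the same route as the paper's: the paper implements the identical distance-to-the-span-of-previous-vectors idea by choosing a coset in the quotient $\XX/[f_1,\dots,f_{n-1}]$ with quotient norm in $(\delta,1)$ and lifting a representative of norm less than one, which is just Riesz's lemma stated in quotient form. Your key observation --- that Riesz's lemma uses only homogeneity and the closedness of finite-dimensional subspaces, so the modulus of concavity $\kappa$ never enters --- is precisely what makes the argument go through for quasi-norms.
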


\begin{proof}We construct the sequence $(f_n)_{n=1}^\infty$ recursively. Fix $0<\delta<1$ and assume that for a given $k\in\NN$ we have chosen vectors $(f_n)_{n=1}^{k-1}$ in $B_{\XX}$ fulfilling our claim. Set $\YY_n=[\xx_n \colon 1 \le n \le k-1]$ and $\XX_n=\XX/\YY_{n-1}$. Since $\XX_n\not=\{0\}$, there is $f\in\XX$ such that $\Vert -f+\YY_{n-1}\Vert<1$ and $\Vert -f+\YY_{n-1}\Vert>\delta$. The mere definition of the norm in $\XX/\YY_{n-1}$ yields $f_n\in \XX$ such that $f+f_n\in\YY_{n-1}$ and $\Vert f_n\Vert < 1$. Let $k\le n-1$. Since $f_{k}-(f+f_n)\in\YY_{n-1}$,
\[ \Vert f_k-f_n\Vert =\Vert f+f_k-f_n-f\Vert \ge \Vert f+\YY_{n-1}\Vert>\delta.\qedhere\]
\end{proof}

\begin{proof}[Proof of Lemma~\ref{lem:TI}]Let $\YY$ be a quasi-Banach space isomorphic to a subspace of both $\LL_1$ and $\LL_2$, and let $T_i\colon\YY\to \LL_i$ be an isomorphic embedding, $i=1$, $2$. By Lemma~\ref{lem:net} we can pick a sequence $(f_n)_{n=1}^\infty$ in $B_{\YY}$ such that $\inf_{n\not=m}\Vert f_ n-f_m\Vert>0$. Using a Cantor diagonal argument, passing to a subsequence we obtain that $(T_i(f_n))_{n=1}^\infty$ is pointwise convergent in $\LL_i$, $i=1$, $2$. Then, if $\yy_n= f_{2n-1}-f_{2n}$, $(T_i(\yy_n))_{n=1}^\infty$ is pointwise convergent to zero in $\LL_i$, $i=1$, $2$. Since $\inf_n \Vert \yy_n\Vert>0$ and $\sup_n \Vert \yy_n\Vert<\infty$, applying a standard ``gliding hump'' argument (see \cite{BePe1958}) we may pass to a further subsequence and obtain that $(\yy_n)_{n=1}^\infty$ is equivalent to a block basic sequence with respect to the canonical basis of both $\LL_1$ and $\LL_2$. Then for any $(a_n)_{n=1}^\infty\in c_{00}$ we have
\[
\left(\sum_{n=1}^\infty |a_n|^q\right)^{1/q}
\lesssim\left\Vert \sum_{n=1}^\infty a_n T_2(\yy_n) \right\Vert
\approx \left\Vert \sum_{n=1}^\infty a_n T_1(\yy_n) \right\Vert
\lesssim\left(\sum_{n=1}^\infty |a_n|^r\right)^{1/r}.
\]
This absurdity proves that $\LL_1$ and $\LL_2$ are totally incomparable.
\end{proof}

A quasi-Banach space (respectively, a quasi-Banach lattice) $\XX$ is said to be \emph{sufficiently Euclidean} if $\ell_2$ is crudely finitely representable in $\XX$ as a complemented subspace (respectively, complemented sublattice), i.e., there is a positive constant $C$ such that for every $n\in\NN$ there are bounded linear maps (respectively, lattice homomorphisms) $I_n \colon\ell_2^n \to \XX$ and $P_n\colon \XX \to \ell_2^n$ with $P_n\circ I_n =\Id_{\ell_2^n}$ and $ \Vert I_n\Vert \, \Vert P_n \Vert\le C$. We say that $\XX$ is \emph{anti-Euclidean} (resp. \emph{lattice anti-Euclidean}) if it is not sufficiently Euclidean.

The theory of anti-Euclidean (locally convex) spaces goes back to \cite{CK1998}. As for nonlocally spaces, we point out that if the Banach envelope of a quasi-Banach space $\XX$ is anti-Euclidean, then $\XX$ is anti-Euclidean. However, no examples are known of anti-Euclidean quasi-Banach spaces whose Banach envelope is sufficiently Euclidean. Perhaps this gap is at the root of the lack of motivation for developing a theory of sufficiently Euclidean spaces in the setting of quasi-Banach spaces. In practice, the only thing we will use in this paper is that $\ell_1$ (the Banach envelope of $\ell_{p}$ for $0<p<1$) is anti-Euclidean. The most natural way to see this relies on Grothendick's theorem. Indeed, if $\ell_1$ were sufficiently Euclidean, by \cite{LinPel1968}*{Theorem 4.1} the identity map on $\ell_2^n$ would be absolutely summing uniformly in $n$, which is absurd.

\begin{remark}\label{rmk:fromAEtoLAE}
If a quasi-Banach space $\XX$ is anti-Euclidean then every complemented subspace $\YY$ of $\XX$ is lattice anti-Euclidean with respect to any lattice estructure on $\YY$.
\end{remark}

\begin{proposition}\label{prop:AEEl1} Suppose $\XX$ is a quasi-Banach space with a semi-normalized unconditional basis $\BB$ which dominates the unit vector basis of $\ell_1$. Then the coefficient transform regarded as a map from $\XX$ into $\ell_{1}$ is an envelope map, hence $\widehat \XX\approx \ell_{1}$ is anti-Euclidean.
\end{proposition}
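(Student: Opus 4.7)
My plan is to identify the coefficient transform $\Phi\colon f\mapsto (\xx_j^*(f))_{j\in J}$ with an envelope map, from which $\widehat{\XX}\approx\ell_1$ and the anti-Euclidean conclusion follow. First, the domination hypothesis yields a bounded linear operator $T\colon\XX\to\ell_{1}$ with $T(\xx_j)=\ee_j$ for every $j\in J$; by uniqueness of basis expansions $T$ agrees with $\Phi$ on the linear span of $\BB$, and by continuity on all of $\XX$. So $\Phi$ takes values in $\ell_1$ and is bounded. Since $\ell_1$ is a Banach space, the universal property of the Banach envelope then yields a bounded $\widehat{\Phi}\colon\widehat{\XX}\to\ell_{1}$ with $\widehat{\Phi}\circ J_\XX=\Phi$ and $\widehat{\Phi}(J_\XX(\xx_j))=\ee_j$ for every $j\in J$.

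The heart of the argument is the reverse estimate $\Vert g\Vert_{\widehat{\XX}}\lesssim\Vert\widehat{\Phi}(g)\Vert_{\ell_1}$. Setting $D:=\sup_{j\in J}\Vert\xx_j\Vert<\infty$, I would observe that $\Vert J_\XX(\xx_j)\Vert_{\widehat{\XX}}\le D$ because $J_\XX$ is a contraction. Crucially, since $\widehat{\XX}$ is a \emph{Banach} space the honest triangle inequality applies, so for any finite sum $f=\sum_{j\in F}a_j\xx_j$
\[
\Vert J_\XX(f)\Vert_{\widehat{\XX}}\le\sum_{j\in F}|a_j|\,\Vert J_\XX(\xx_j)\Vert_{\widehat{\XX}}\le D\,\Vert\Phi(f)\Vert_{\ell_1}.
\]
Approximating an arbitrary $f\in\XX$ by the partial sums of its basis expansion (which converge in $\XX$, hence under $J_\XX$ in $\widehat{\XX}$, and under $\Phi$ in $\ell_1$) extends this to all $f\in\XX$, and density of $J_\XX(\XX)$ in $\widehat{\XX}$ upgrades it further to $\Vert g\Vert_{\widehat{\XX}}\le D\Vert\widehat{\Phi}(g)\Vert_{\ell_1}$ for every $g\in\widehat{\XX}$. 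Hence $\widehat{\Phi}$ is an isomorphic embedding, i.e.\ $\Phi$ is an envelope map.

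To finish, the image of $\widehat{\Phi}$ is closed in $\ell_1$ and contains every $\ee_j$, so it equals $\ell_1$; therefore $\widehat{\XX}\approx\ell_1$, which is anti-Euclidean by the Grothendieck-type observation recalled just before the proposition. The main obstacle is precisely the upper bound on the envelope norm: in the original quasi-Banach space $\XX$ the $\kappa$-triangle inequality would blow up when summing over arbitrarily many terms, but in the envelope the constant is $1$, which is exactly what makes the estimate $\Vert J_\XX(f)\Vert_{\widehat{\XX}}\le D\Vert\Phi(f)\Vert_{\ell_1}$ go through. Everything else is bookkeeping via the universal property.
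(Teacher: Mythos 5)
Your proof is correct and takes essentially the same route as the paper's: the universal property of the envelope transfers the domination of $\EE_{\ell_1}$ by $\BB$ to $\widehat{\BB}$, while the honest triangle inequality in the Banach space $\widehat{\XX}$ supplies the reverse estimate, so $\widehat{\BB}$ is equivalent to the unit vector basis of $\ell_1$. The paper compresses this into two lines using the language of mutual domination of bases; your version merely spells out the same norm inequalities and density arguments explicitly.
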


\begin{proof} Lemma~\ref{lem:envelope} yields that $\widehat \BB$ is a semi-normalized unconditional basis of $\widehat \XX$. By the universal property of $\widehat \XX$, the basis $\widehat \BB$ dominates the unit vector system of $\ell_1$ and, by the local convexity of $\widehat \XX$, the converse also holds. We conclude that those two bases are equivalent, therefore $\widehat \XX\approx \ell_1$.
\end{proof}

\section{Complemented sequences in quasi-Banach spaces}\label{Prepa}
\noindent
This section is geared towards Theorem~\ref{thm:keytechnique}, which tells us that, under three straightforwardly verified conditions regarding a space and a basis, the complemented unconditional basic sequences of certain quasi-Banach spaces with unconditional basis are rather easy to classify. The techniques used in the proof of this theorem are a development of the methods introduced by Casazza and Kalton in \cites{CK1998, CK1999} to investigate the problem of uniqueness of unconditional basis in a class of Banach lattices that they called \emph{anti-Euclidean}. The subtle but crucial role played by the lattice structure of the space in the proof of Theorem~\ref{thm:keytechnique} has to be seen in that it will permit to simplify the untangled way in which complemented basic sequences can be written in terms of the basis. These techniques have been extended to the nonlocally convex setting and efficiently used in the literature to establish the uniqueness of unconditional bases of complemented subspaces of \emph{infinite} products such as $\ell_{p}(\ell_{q})=(\ell_{q}\oplus \ell_{p}\oplus\dots\oplus \ell_{p}\dots)_{q}$ for $p\in (0,1]\cup \{\infty\}$ and $q\in (0,1]\cup \{2,\infty\}$ (\cites{AKL2004,AL2008, AlbiacLeranoz2010, AL2010, AL2011}). However, the spaces $\bigoplus_{p\in A}\ell_{p}$ with $2<|A|<\infty$ are not complemented in any space $\ell_{p}(\ell_{q})$, and so the known results on the subject do not shed direct information about them.

Before moving on, recall that a basic sequence $(\uu_{i})_{i\in I}$ in a quasi-Banach space $\XX$ (with nontrivial dual) is said to be \emph{complemented} if its closed linear span $\UU= [\uu_{i} \colon i\in I]$ is a complemented subspace of $\XX$, i.e., there is a bounded linear map $P\colon\XX\to\UU$ with $P|_\UU=\Id_\UU$. Thus, a basic sequence $\BB_u=(\uu_i)_{i\in I}$ is complemented in $\XX$ if and only if there is a sequence $(\uu_i^*)_{i\in I}$ in $\XX^*$ such that $\uu_i^*(\uu_k)=\delta_{i,j}$ for every $i$, $k\in I$ and the linear map $P_u\colon\XX\to \XX$ given by
\begin{equation}\label{eq:projCUBS}
P_u(f)=\sum_{i\in I} \uu_i^*(f) \, \uu_i, \quad f\in\XX,
\end{equation}
is well-defined (hence bounded by Closed Graph Theorem). We will refer to $(\uu_i^*)_{i\in I}$ as a sequence of \emph{projecting functionals} for $\BB_u$. Notice that the sequence $(\uu_i^*)_{i\in I}$ thus defined need not be unique unless, of course, $\BB_u=(\uu_i)_{i\in I}$ spans the whole space $\XX$. A complemented basic sequence $\BB_u=(\uu_i)_{i\in I}$ in $\XX$ with mutually disjoint supports with respect to a basis $\BB$ will be said to be \emph{well complemented} if we can choose a sequence of projecting functionals $\BB_{u}^*=(\uu_i^*)_{i\in I}$ with $\supp(\uu_i^*)\subseteq \supp(\uu_i)$ for $i\in I$. In this case $\BB_{u}^*$ is called a sequence of \emph{good projecting functionals} for $\BB_{u}$. For instance, since the coordinate projections of an unconditional basis $\BB$ are bounded, the subbases of $\BB$ are trivially well complemented basic sequences with respect to $\BB$.

Let us get started with a reduction lemma.

\begin{lemma}[cf.\ \cite{AKL2004}*{Lemma 3.8}]\label{lem:k2one} Let $\BB_u=(\uu_i)_{i\in I}$ be a well complemented block basic sequence with respect to an unconditional basis $\BB=(\xx_j)_{j\in J}$ of a quasi-Banach space $\XX$, and let $(\uu_i^*)_{i\in I}$ be a sequence of good projecting functionals for $\BB_u$. Suppose $\BB_v=(\vv_i)_{i\in I}$ and $(\vv_i^*)_{i\in I}$ are sequences in $\XX$ and $\XX^{\ast}$ respectively such that for some positive constant $C$ we have
\begin{enumerate}
\item[(i)] $|\xx_j^*(\vv_i)| \le C |\xx_j^*(\uu_i)|$ for all $(i,j)\in I \times J$,
\item[(ii)] $|\vv_i^*(\xx_j)| \le C |\uu_i^*(\xx_j)|$ for all $(i,j)\in I \times J$,
and
\item[(ii)] $\inf_{i\in I} |\vv_i^*(\vv_i)|>0$ for all $i\in I$.
\end{enumerate}
Then $\BB_v$ is a well complemented block basic sequence equivalent to $\BB_u$. Moreover, if $\vv_i^*(\vv_i)=1$ and $\supp(\vv_i^*)\subseteq\supp(\vv_i)$ for all $i\in I$, then $(\vv_i^*)_{i\in I}$ is a sequence of good projecting functionals for $\BB_v$.
\end{lemma}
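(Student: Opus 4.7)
The approach is to exhibit $\BB_v$ as the image of $\BB_u$ under a bounded multiplier operator of the ambient basis $\BB$, and to exhibit $(\vv_i^*)_{i\in I}$ as the pullback of $(\uu_i^*)_{i\in I}$ under another multiplier. Hypothesis (i), combined with the pairwise disjointness of $(\supp(\uu_i))_{i\in I}$, forces $\supp(\vv_i)\subseteq \supp(\uu_i)$. Consequently the scalars
\[
\gamma_j=
\begin{cases}
\xx_j^*(\vv_i)/\xx_j^*(\uu_i) & \text{if } j\in\supp(\uu_i),\\
0 & \text{if } j\notin \bigcup_{i\in I} \supp(\uu_i),
\end{cases}
\]
are well defined with $\Vert\gamma\Vert_\infty\le C$, so the multiplier $S_\gamma$ on $\XX$ is bounded and $S_\gamma(\uu_i)=\vv_i$ for every $i\in I$. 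By the same token, hypothesis (ii) produces $\delta\in \ell_\infty(J)$ with $\Vert\delta\Vert_\infty\le C$ such that $\vv_i^*(\xx_j)=\delta_j\uu_i^*(\xx_j)$ for every $i,j$; equivalently $\vv_i^*=\uu_i^*\circ S_\delta$ as bounded functionals on $\XX$, and in particular $\supp(\vv_i^*)\subseteq \supp(\uu_i^*)\subseteq \supp(\uu_i)$, so the supports of the $\vv_i^*$ are mutually disjoint.

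Next I check biorthogonality. For $i\ne k$, the vector $S_\delta(\vv_k)$ is supported in $\supp(\uu_k)$, while $\uu_i^*$ lives on $\supp(\uu_i)$; disjointness therefore forces $\vv_i^*(\vv_k)=\uu_i^*(S_\delta(\vv_k))=0$. Combined with hypothesis (iii), this shows that the scalars $\eta_i:=1/\vv_i^*(\vv_i)$ form a bounded sequence and that $(\vv_i^*/\vv_i^*(\vv_i))_{i\in I}$ is biorthogonal to $\BB_v$.

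The crux is then to build a bounded projection onto $[\BB_v]$ via the composition
\[
T = S_\gamma\circ M_\eta\circ P_u\circ S_\delta,
\]
where $P_u(f)=\sum_i \uu_i^*(f)\uu_i$ is the bounded projection onto $[\BB_u]$ supplied by the well complementation hypothesis on $\BB_u$, and $M_\eta$ is the multiplier on the unconditional basic sequence $(\uu_i)_{i\in I}$ determined by $\uu_i\mapsto \eta_i\uu_i$, which is bounded because $(\eta_i)$ is bounded. Unwinding the composition gives $T(f)=\sum_i (\vv_i^*(f)/\vv_i^*(\vv_i))\vv_i$, and the previous paragraph yields $T(\vv_k)=\vv_k$. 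Hence $T$ is a bounded projection of $\XX$ onto $[\BB_v]$, so $\BB_v$ is a complemented basic sequence. For the equivalence $\BB_v\sim\BB_u$, the restriction $S_\gamma\colon [\BB_u]\to[\BB_v]$ sends $\uu_i\mapsto \vv_i$ boundedly, and the restriction of $M_\eta\circ P_u\circ S_\delta$ to $[\BB_v]$ sends $\vv_i\mapsto \uu_i$ boundedly; these two operators are mutually inverse on the bases and therefore extend to mutually inverse isomorphisms between $[\BB_u]$ and $[\BB_v]$. Under the extra assumptions of the ``moreover'' clause, $\eta_i=1$ and $T$ collapses to $\sum_i \vv_i^*(f)\vv_i$, whose projecting functionals are exactly the $\vv_i^*$; the support hypothesis then promotes them to good projecting functionals.

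The only delicate point is the careful bookkeeping for defining $\gamma$ and $\delta$ from the pointwise inequalities (i)--(ii) and for tracking the supports through the four-fold composition, in particular verifying that $S_\delta(\vv_k)$ sits inside $\supp(\uu_k)$ so that the disjointness of the blocks can be brought to bear. Once the two multipliers are in place, every ingredient is a standard bounded operator on $\XX$ or on $[\BB_u]$, and the whole argument reduces to chasing scalars through $T$; no convexity, envelope, or geometric input is required.
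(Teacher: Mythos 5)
Your proposal is correct and follows essentially the same route as the paper: both proofs realize $\vv_i=S_\gamma(\uu_i)$ and $\vv_i^*=S_\delta^*(\uu_i^*)$ via bounded multipliers of the ambient unconditional basis and then obtain the equivalence from $P_u\circ S_\delta$ and the projection from $S_\gamma\circ P_u\circ S_\delta$ (the paper normalizes $\vv_i^*(\vv_i)=1$ at the outset by dilation instead of inserting your diagonal operator $M_\eta$). The only bookkeeping point you leave implicit is that, to certify \emph{well} complementedness in the general case, the projecting functionals must be cut down to $\supp(\vv_i)$ (replace $\vv_i^*$ by $S_{\supp(\vv_i)}^*(\vv_i^*)$, which changes nothing in your estimates); the paper makes this reduction in its first line.
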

\begin{proof} Without loss of generality, by dilation we assume that $\vv_i^*(\vv_i)=1$ for all $i\in I$. Replacing $\vv_i^*$ with $S_{\supp(\vv_i)}^*(\vv_i^*)$ we can also assume that $\supp(\vv_i^*)\subseteq \supp(\vv_i)$ for all $i\in I$. Since $\supp(\vv_i)\subseteq\supp(\uu_i)$ for every $i\in I$, there are $\beta$ and $\gamma\in\ell_\infty$ such that $S_\gamma(\uu_i)=\vv_i$ and $S_\beta^*(\uu_i^*)=\vv_i^*$ for every $i\in I$. It follows that $\BB_v$ is a block basic sequence, thus $\vv_i^*(\vv_k)=0$ whenever $i\not=k$.

Let $P_u$ be defined as in \eqref{eq:projCUBS}. The bounded linear map $T= P_u \circ S_\beta$ satisfies
\[
T(f)=\sum_{i\in I}\uu_i^*(S_\beta(f)) \, \uu_i
=\sum_{i\in I} S_\beta^*(\uu_i^*)(f) \, \uu_i
=\sum_{i\in I} \vv_i^*(f)\, \uu_i
\]
for all $f\in \XX$. In particular, $T(\vv_i)=\uu_i$ for all $i\in I$. Therefore $\BB_u$ and $\BB_v$ are equivalent.

Finally, the bounded linear map $P_v=S_\gamma\circ T$ satisfies
\[
P_v(f)=\sum_{i\in I} \vv_i^*(f)\, \vv_i, \quad f\in \XX.
\]
We deduce that $\BB_v$ is well complemented and that $(\vv_i^*)_{i\in I}$ is a sequence of (good) projecting functionals for $\BB_v$.
\end{proof}

The following definition identifies and gives relief to an unstated feature shared by some unconditional bases. Examples of such bases can be found e.g. in \cites{Kalton1977, CK1998, AL2008}, where the property naturally arises in connection with the problem of uniqueness of unconditional basis.

\begin{definition} A semi-normalized unconditional basis $\BB=(\xx_j)_{j\in J}$ of a quasi-Banach space $\XX$ is said to be \emph{universal for well complemented block basic sequences} if for every semi-normalized well complemented block basic sequence $\BB_u=(\uu_i)_{i\in I}$ of $\BB$ there is a one-to-one map $\pi\colon I\to J$ such that $\pi(i)\in\supp(\uu_i)$ for all $i\in I$, and $\BB_u$ is equivalent to the rearranged subbasis $(\xx_{\pi(i)})_{i\in I}$ of $\BB$.
\end{definition}

\begin{remark}\label{rmk:lpK1}
The fact that the canonical basis $\EE_{\ell_{p}}$ of $\ell_p$, $0<p\le \infty$, is \emph{perfectly homogeneous} (see \cite{AlbiacKalton2016}*{Chapter 9}) implies that $\EE_{\ell_{p}}$ is universal for well complemented block basic sequences.
\end{remark}

This observation in combination with our next result gives ground to the fact that for any $A\subset (0,\infty]$ finite, the basis $\bigoplus_{p\in A}\EE_{\ell_{p}}$ of $\bigoplus_{p\in A}\ell_{p}$ is universal for well complemented block basic sequences.

\begin{proposition}\label{prop:direcsumk1} Let $(\XX_{i})_{i\in F}$ be a finite collection of quasi-Banach spaces. For each $i\in F$ suppose that $\BB_i$ is a basis of $\XX_{i}$ which is universal for well complemented block basic sequences. Then the basis $\BB=\bigoplus_{i\in F} \BB_i$ of $\XX= \bigoplus_{i\in F} \XX_{i}$ is universal for well complemented block basic sequences.
\end{proposition}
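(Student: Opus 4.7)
The plan is to reduce the claim to one application of the universal-for-well-complemented hypothesis inside each factor $\BB_i$, after reshaping the given sequence so that every element sits entirely in one factor. Let $\BB_u=(\uu_k)_{k\in K}$ be a semi-normalized well complemented block basic sequence with respect to $\BB$, with good projecting functionals $(\uu_k^*)_{k\in K}$. Decompose along the direct sum: $\uu_k=\sum_{i\in F}\uu_{k,i}$ with $\uu_{k,i}\in\XX_i$, and $\uu_k^*=\sum_{i\in F}\uu_{k,i}^*$ with $\uu_{k,i}^*\in\XX_i^*$. The support condition $\supp(\uu_k^*)\subseteq\supp(\uu_k)$ descends coordinatewise to $\supp(\uu_{k,i}^*)\subseteq\supp(\uu_{k,i})$ inside $\BB_i$, and $\sum_{i\in F}\uu_{k,i}^*(\uu_{k,i})=\uu_k^*(\uu_k)=1$; by a pigeonhole argument there is $i(k)\in F$ with $|c_k|\ge 1/|F|$, where $c_k:=\uu_{k,i(k)}^*(\uu_{k,i(k)})$. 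Set $\vv_k:=\uu_{k,i(k)}$ and $\vv_k^*:=c_k^{-1}\,\uu_{k,i(k)}^*$.

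The hypotheses of Lemma~\ref{lem:k2one} then hold with constant $C=|F|$: (i) because $\vv_k$ is a coordinate restriction of $\uu_k$; (ii) because $\uu_{k,i(k)}^*$ is a coordinate restriction of $\uu_k^*$ combined with $|c_k|\ge 1/|F|$; and (iii) automatically from $\vv_k^*(\vv_k)=1$. The lemma yields $\BB_v=(\vv_k)_{k\in K}\sim\BB_u$ as a well complemented block basic sequence with good projecting functionals $(\vv_k^*)_{k\in K}$. Partition $K=\bigsqcup_{i\in F}K_i'$ by $K_i':=\{k\in K\colon i(k)=i\}$. For each $i$ the subfamily $(\vv_k)_{k\in K_i'}$ lives inside $\XX_i$; since $\vv_k^*$ annihilates $\XX_j$ for $j\ne i(k)$, restricting the bounded projection $P_v$ to $\XX_i$ gives $P_v|_{\XX_i}(g)=\sum_{k\in K_i'}\vv_k^*(g)\vv_k$, so $(\vv_k)_{k\in K_i'}$ is well complemented in $\XX_i$ with respect to $\BB_i$. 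Uniform semi-normalization inside $\XX_i$ follows from $\|\vv_k\|\le\|\uu_k\|$ and $\|\vv_k\|\ge 1/\|\vv_k^*\|\ge 1/(|F|\,\|\uu_k^*\|)$.

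The universal-for-well-complemented hypothesis on each $\BB_i$ now produces one-to-one maps $\pi_i\colon K_i'\to J_i$ with $\pi_i(k)\in\supp(\vv_k)$ and $(\vv_k)_{k\in K_i'}\sim(\xx_{i,\pi_i(k)})_{k\in K_i'}$ inside $\XX_i$, where $J_i$ is the index set of $\BB_i$. Glue these together via $\pi\colon K\to J$, $\pi(k):=(i(k),\pi_{i(k)}(k))$: injectivity follows from the injectivity of each $\pi_i$ together with the disjointness of the first coordinates, and $\pi(k)\in\supp(\uu_k)$ by construction. Because the quasi-norm on $\bigoplus_{i\in F}\XX_i$ is the supremum of coordinate quasi-norms, the per-factor equivalences assemble into $(\vv_k)_{k\in K}\sim(\xx_{\pi(k)})_{k\in K}$, and chaining with $\BB_u\sim\BB_v$ finishes the proof. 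The step I expect to require the most care is verifying that $(\vv_k)_{k\in K_i'}$ genuinely inherits both the well complemented property and uniform semi-normalization inside $\XX_i$: one must track the constants produced by the pigeonhole selection and confirm that the restriction of $\vv_k^*$ to $\XX_i^*$ acts as a projecting functional for precisely the subfamily indexed by $K_i'$.
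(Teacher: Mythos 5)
Your proposal is correct and follows essentially the same route as the paper's proof: decompose along the direct sum, use pigeonhole on $\uu_k^*(\uu_k)=1$ to select a factor $i(k)$ where the pairing is at least $|F|^{-1}$, invoke Lemma~\ref{lem:k2one} to replace $\BB_u$ by the single-coordinate sequence, and then apply the universality hypothesis factor by factor before gluing the permutations. Your write-up is somewhat more explicit than the paper's about verifying the hypotheses of Lemma~\ref{lem:k2one} and the well-complementedness and semi-normalization of the restricted families, but there is no substantive difference in method.
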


\begin{proof}
Let $(\uu_j)_{j\in J}$ be a well complemented block basic sequence in $\XX$ with respect to $\BB$ with good projecting functionals $(\uu_j^*)_{j\in J}$. Write $\uu_j=(\uu_{j,i})_{i\in F}$ and $\uu_j^*=(\uu^*_{j,i})_{i\in I}$. By hypothesis we have $\supp(\uu^*_{j,i})\subseteq\supp(\uu_{j,i})$ for every $(j,i)\in J\times F$. Since $\sum_{i\in F}\uu^*_{j,i}(\uu_{j,i})=1$, there is $\iota\colon J \to F$ such that
\[
|\uu^*_{j,\iota(j)}(\uu_{j,\iota(j)})|\ge |F|^{-1},\quad j\in J.
\]
For $i\in F$ let $S_i\colon \XX_i\to \XX$ be the inclusion map. By Lemma~\ref{lem:k2one}, the family
\[
\BB'=(S_{\iota(j)} (\uu_{j,\iota(j)}))_{j\in J}
\]
is a well complemented block basic sequence in $\XX$ equivalent to $\BB$. We infer that $\BB'_i:=(\uu_{j,i})_{j\in \iota^{-1}(i)}$ is is a well complemented block basic sequence in $\XX_i$ for every $i\in F$. By hypothesis, $\BB_i'$ is permutatively equivalent to a subbasis $\BB_{i}''$ of $\BB_i$. Hence, $\BB'$ is permutatively equivalent to the subbasis $\bigoplus_{i\in F} \BB_{i}''$ of $\BB$.
\end{proof}

\begin{theorem}\label{thm:keytechnique} Let $\XX$ be a quasi-Banach space whose Banach envelope is anti-Euclidean.
Suppose $\BB$ is an unconditional basis for $\XX$ such that:
\begin{enumerate}
\item[(i)] The lattice structure induced by $\BB$ in $\XX$ is L-convex;
\item[(ii)] $\BB$ is universal for well complemented block basic sequences; and
\item[(iii)] $\BB\sim \BB^2$.
\end{enumerate}
Then every complemented unconditional basic sequence of $\XX$ is permutatively equivalent to a subbasis of $\BB$.
\end{theorem}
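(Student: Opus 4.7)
The plan is to adapt the Casazza--Kalton anti-Euclidean machinery from the Banach setting to our quasi-Banach framework. The idea is to reduce a general complemented unconditional basic sequence to a well complemented block basic sequence, after which hypothesis (ii) finishes the job.

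Let $\BB_u=(\uu_i)_{i\in I}$ be a complemented unconditional basic sequence with projecting functionals $(\uu_i^*)_{i\in I}$; assume semi-normalization and $\uu_i^*(\uu_i)=1$. Writing $\uu_i=\sum_j a_{i,j}\xx_j$ and $\uu_i^*=\sum_j b_{i,j}\xx_j^*$ gives $\sum_{j\in J} a_{i,j}b_{i,j}=1$ for every $i\in I$. The matrix $(a_{i,j}b_{i,j})$ encodes the action of the projection $P_u$ on the diagonal, and the goal is to show this mass is genuinely concentrated.

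\textbf{Step 1 (Finite concentration).} I claim that under (i) and the anti-Euclidean hypothesis, there is $\delta>0$ such that for every $i\in I$ one can extract a finite set $F_i\subseteq \supp(\uu_i)\cap \supp(\uu_i^*)$ with $\sum_{j\in F_i} a_{i,j}b_{i,j}>1-\delta$, and with uniform control on the tail. The argument passes through the Banach envelope: by Lemma~\ref{lem:envelope} and Lemma~\ref{lem:envelopebis}, $\widehat\BB$ is still a semi-normalized unconditional basis of $\widehat\XX$, and L-convexity transfers the lattice structure faithfully to the envelope. A mass distribution spread too evenly would produce, via a Grothendieck-type obstruction in the spirit of \cite{CK1998}, uniformly complemented copies of $\ell_2^n$ inside $\widehat\XX$, contradicting the anti-Euclidean hypothesis. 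Truncating $\uu_i$ and $\uu_i^*$ to $F_i$ and invoking Lemma~\ref{lem:k2one} yields an equivalent sequence whose terms have finite supports.

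\textbf{Step 2 (Disjointification via the square).} The finite supports $F_i$ may collide. To handle this, iterate (iii) to identify $\XX$ with $\XX^{2^k}$ for any $k$, so that each $j\in J$ acquires as many copies as needed. A Hall marriage argument---reflected in the paper's keyword---assigns to each $i$ a copy $\tilde F_i$ of $F_i$ sitting in a suitable layer of the iterated square, chosen so that the family $(\tilde F_i)_{i\in I}$ is pairwise disjoint. Applying Lemma~\ref{lem:k2one} inside the doubled model produces a well complemented block basic sequence $\BB_v$ equivalent to $\BB_u$.

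\textbf{Step 3 (Universality).} By Proposition~\ref{prop:direcsumk1}, $\BB^{2^k}$ is universal for well complemented block basic sequences. Hypothesis (ii) applied to $\BB_v$ therefore furnishes an injection $\pi$ such that $\BB_v$ is permutatively equivalent to a subbasis of $\BB^{2^k}$. Pulling back via the equivalence $\BB\sim\BB^{2^k}$ from (iii) gives the desired permutative equivalence between $\BB_u$ and a subbasis of $\BB$.

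The principal obstacle is Step~1: executing the quantitative anti-Euclidean concentration in a quasi-Banach lattice. This requires a careful transfer of Grothendieck-type information through the Banach envelope, legitimized by Lemmas~\ref{lem:envelope}--\ref{lem:envelopetris}, with L-convexity serving as the bridge that guarantees the envelope retains enough lattice structure to detect Hilbertian subspaces. The Hall-type combinatorics of Step~2 is conceptually simpler, but it must be coupled with the equivalences produced in Step~1 in such a way that the unconditionality and complementation constants remain uniformly bounded throughout the reduction.
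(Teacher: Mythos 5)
Your overall architecture coincides with the paper's: reduce the complemented unconditional basic sequence to a well complemented block basic sequence with respect to $\BB^s$ for some finite $s$, then use (iii) to identify $\BB^s$ with $\BB$ and (ii) to conclude. The paper, however, obtains the entire reduction (your Steps 1 and 2) as a direct citation of Theorem 3.4 of \cite{AKL2004}, after checking via Lemma~\ref{lem:envelope} and Remark~\ref{rmk:fromAEtoLAE} that the Banach envelope of the span of the sequence is lattice anti-Euclidean. What you propose instead is to re-prove that theorem inline, and that is where the gap lies: the claim in Step 1 that a ``too evenly spread'' diagonal produces uniformly complemented copies of $\ell_2^n$ is precisely the Casazza--Kalton argument, and carrying it out for a quasi-Banach lattice---where duality is unavailable and one must route the Grothendieck-type obstruction through the envelope, with $L$-convexity supplying the lattice estimates---is the substantive content of \cite{AKL2004}. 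As written, Step 1 is an assertion, not a proof.

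There is also a structural problem with the form of concentration you extract. Producing, for each $i$, a finite set $F_i$ carrying diagonal mass $\sum_{j\in F_i}a_{i,j}b_{i,j}>1-\delta$ does not feed into Step 2 unless you also control, uniformly in $i$, the cardinality $|F_i|$ and, crucially, the multiplicity with which a fixed coordinate $j\in J$ recurs among the $F_i$. Hall's marriage lemma inside $\BB^{s}$ requires that each $j$ be essential for at most $s$ indices $i$ with $s$ fixed in advance; if some coordinate were needed by unboundedly many $i$, no finite power $\BB^{2^k}$ would admit a disjoint system of representatives, and ``iterating (iii) as many times as needed'' does not rescue this because $k$ must be chosen before the equivalence $\BB\sim\BB^{2^k}$ is invoked. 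The correct output of the anti-Euclidean step is a $\delta>0$ and a map $\pi\colon I\to J$ with $|\xx_{\pi(i)}^*(\uu_i)\,\uu_i^*(\xx_{\pi(i)})|\ge\delta$ which is at most $s$-to-one (the multiplicity bound then following from the uniform bound on the column sums $\sum_i|\xx_j^*(\uu_i)\,\uu_i^*(\xx_j)|$ furnished by unconditionality and the boundedness of the projection). Your sketch never establishes either the pointwise lower bound or the multiplicity bound; either supply that argument in full or, as the paper does, cite \cite{AKL2004}*{Theorem 3.4}.
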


\begin{proof} Let $\BB_0$ be a semi-normalized complemented unconditional basic sequence in $\XX$. The unconditional basic sequence $\widehat{\BB_0}$ spans a complemented subspace of $\widehat \XX$ and hence, by Remark~\ref{rmk:fromAEtoLAE}, $\widehat{\BB_0}$ is lattice anti-Euclidean. Theorem 3.4 from \cite{AKL2004} gives that $\BB_0$ is permutatively equivalent to a well complemented block basic sequence of $\BB^s$ for some $s\in\NN$. Since, by the hypothesis (iii), $\BB^s\sim\BB$ and $\BB$ is universal for well complemented block basic sequences, we are done.
\end{proof}

This result applied to finite direct sums $\bigoplus_{p\in A}\ell_{p}$ gives the following.

\begin{theorem}\label{thm:A}Let $A$ be a finite subset of $(0,1]$. Then every complemented unconditional basic sequence of the sequence space $\bigoplus_{p\in A}\ell_{p}$ is permutatively equivalent to the unit vector basis of $\bigoplus_{p\in B}\ell_{p}$ for some $B\subseteq A$.
\end{theorem}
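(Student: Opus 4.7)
The plan is to verify the four hypotheses of Theorem~\ref{thm:keytechnique} for the space $\XX=\bigoplus_{p\in A}\ell_p$ equipped with its canonical basis $\BB=\bigoplus_{p\in A}\EE_{\ell_p}$, and then reduce ``subbasis of $\BB$'' to ``$\bigoplus_{p\in B}\EE_{\ell_p}$'' via Lemma~\ref{lem:sumSSK3}. Since every item in the checklist corresponds to a result already on the table, I expect no genuine obstacle; the task is bookkeeping rather than constructing something new.

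\emph{Verifying the hypotheses.} For the anti-Euclidean Banach envelope: since $A\subseteq(0,1]$ and the Banach envelope of $\ell_p$ is $\ell_1$ for $0<p\le 1$, an iterated application of Lemma~\ref{lem:envelopetris} identifies $\widehat{\XX}$ with a finite $\ell_\infty$-sum of copies of $\ell_1$, which is isomorphic to $\ell_1$. As recalled in the preliminaries, $\ell_1$ is anti-Euclidean by Grothendieck's theorem, so $\widehat{\XX}$ is anti-Euclidean. Condition (i) follows from Lemma~\ref{lem:lpconvexity}: the sequence space $\XX$ is $q$-convex with $q=\min A>0$, and by Kalton's result $q$-convexity implies L-convexity. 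Condition (ii) follows by combining Remark~\ref{rmk:lpK1}, which says each $\EE_{\ell_p}$ is universal for well complemented block basic sequences, with Proposition~\ref{prop:direcsumk1}, which preserves this property under finite direct sums. For condition (iii), note that for each $p\in A$ one has the standard identification $\ell_p\oplus\ell_p\approx \ell_p$ via the canonical bases, so $\EE_{\ell_p}^2\sim\EE_{\ell_p}$, and taking finite direct sums gives $\BB^2\sim\BB$.

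\emph{Concluding the proof.} With the hypotheses in place, Theorem~\ref{thm:keytechnique} yields that every complemented unconditional basic sequence of $\XX$ is permutatively equivalent to a subbasis of $\BB=\bigoplus_{p\in A}\EE_{\ell_p}$. Since each $\EE_{\ell_p}$ is subsymmetric (indeed symmetric), Lemma~\ref{lem:sumSSK3} applies: every subbasis of $\bigoplus_{p\in A}\EE_{\ell_p}$ is permutatively equivalent to $\bigoplus_{p\in B}\EE_{\ell_p}$ for some $B\subseteq A$. This is exactly the conclusion of Theorem~\ref{thm:A}.

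\emph{Anticipated friction.} The only place that requires a moment of care is the Banach envelope computation when $A$ is a multi-element set: one must be sure Lemma~\ref{lem:envelopetris} extends cleanly from two to finitely many summands, but an obvious induction on $|A|$ settles this. Everything else is invoking a named result verbatim.
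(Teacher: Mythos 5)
Your proposal is correct and follows essentially the same route as the paper, which proves Theorem~\ref{thm:A} by checking the hypotheses of Theorem~\ref{thm:keytechnique} via Lemma~\ref{lem:lpconvexity}, Remark~\ref{rmk:lpK1}, Proposition~\ref{prop:direcsumk1}, and then invoking Lemma~\ref{lem:sumSSK3} to pass from a subbasis to $\bigoplus_{p\in B}\EE_{\ell_p}$. The only cosmetic difference is that you compute $\widehat{\XX}\approx\ell_1$ by iterating Lemma~\ref{lem:envelopetris}, whereas the paper gets the same conclusion from the lattice inclusion $\bigoplus_{p\in A}\ell_p\subseteq\ell_1^{|A|}$ together with Proposition~\ref{prop:AEEl1}; both are valid.
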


\begin{proof} We have that $\bigoplus_{p\in A} \ell_{p}\subseteq \ell_{1}^{|A|}\equiv \ell_1$ (as quasi-Banach lattices). Then, the result follows by combining Lemma~\ref{lem:sumSSK3}, Lemma~\ref{lem:lpconvexity}, Proposition~\ref{prop:AEEl1}, Remark~\ref{rmk:lpK1}, Proposition~\ref{prop:direcsumk1}, and Theorem~\ref{thm:keytechnique}.
\end{proof}

\section{The proof of the Main Theorem}\label{MainSec}
\noindent
With the all groundwork of the previous sections we may proceed directly to proof of our Main Theorem.

\begin{proof}[Completion of the proof of Theorem~\ref{thm:main}]
Without loss of generality we assume that the basis $\BB=(\xx_j)_{j\in J}$ is normalized. Assume also that the set of indexes $A_{\Klt}=\{p\in A \colon p\le 1\}$ is nonempty, otherwise the result would follow from the afore-mentioned result by Edelstein and Wojtaszczyk's \cite{EdWo1976}*{Theorem~4.11}.

Put $A_{\Cvx}=A\setminus A_{\Klt}$, $\XX_{\Klt}=\bigoplus_{p\in A_\Klt} \ell_p$, $\XX_{\Cvx}=\bigoplus_{p\in A_\Cvx} \ell_p$ and $\XX=\bigoplus_{p\in A} \ell_p$, so that $\XX=\XX_{\Klt}\oplus \XX_{\Cvx}$.

Since $\XX_{\Klt}$ is contained in the $|A_\Klt|$-fold product $\ell_1^{|A_\Klt|}$ we have $\widehat{\XX_{\Klt}}\approx \ell_1$ and so $\widehat{\XX}\approx \ell_1\oplus\XX_{\Cvx}$. By Edelstein-Wojtaszczyk's theorem there is a partition $(J_p)_{p\in \{1\} \cup A_{\Cvx}}$ of $J$ such that for each $p\in \{1\} \cup A_{\Cvx}$, the sequence $(J_\XX(\xx_j))_{j\in J_p}$ generates a space isomorphic to $\ell_p$. By Lemma~\ref{lem:envelopebis}, if we put $\YY_p=[\xx_j \colon j\in J_p]$ for $p\in \{1\} \cup A_{\Cvx}$, the restriction of $J_\XX$ to $\YY_p$ is an envelope map. Therefore
\[
\widehat{\YY_p} \approx [J_\XX(\xx_j) \colon j\in J_p] \approx \ell_p, \quad p\in \{1\} \cup A_{\Cvx}.
\]

Since, by Lemmas~\ref{lem:lpconvexity} and \ref{lem:TI}, the spaces $\XX_{\Klt}$ and $\XX_{\Cvx}$ are totally incomparable, there exist complemented subspaces $\YY_{\Klt}$ of $\XX_{\Klt}$ and $\YY_{\Cvx}$ of $\XX_{\Cvx}$ such that $\YY_1\simeq \YY_{\Klt}\oplus \YY_{\Cvx}$. Taking Banach envelopes, in light of Lemma~\ref{lem:envelopetris}, we obtain
\[
\ell_1\approx \widehat{\YY_{\Klt}}\oplus \widehat{\YY_{\Cvx}}= \widehat{\YY_{\Klt}}\oplus \YY_{\Cvx}.
\]
Since $\ell_1$ and $\XX_{\Cvx}$ are totally incomparable, $\YY_{\Cvx}$ must be finite dimensional. Hence $\XX_{\Klt}\oplus\YY_{\Cvx}\approx \XX_{\Klt}$. We infer that $\YY_1$ is isomorphic to a complemented subspace of $\XX_{\Klt}$. Then, by Theorem~\ref{thm:A}, there is $B\subseteq A_{\Klt}$ such that $(\xx_j)_{j\in J_1}$ is permutatively equivalent to the unit vector system of $\bigoplus_{p\in B} \ell_p$. That is, there is a partition $(J_p')_{p\in B}$ of $J_1$ such that $(\xx_j)_{j\in J_p'}$ is (permutatively) equivalent to the unit vector system of $\ell_p$ for every $p\in B$.

If $p\in A_{\Cvx}$, the dual space of $\widehat{\YY_p}$ has a finite cotype. Moreover, by \cite{Kalton1984}*{Theorem 4.2}, $\YY_p$ is an L-convex quasi-Banach lattice. Therefore, by \cite{Kalton1986}*{Theorem 3.4}, $J_\XX|_{\YY_p}$ is an isomorphism and so $(\xx_j)_{j\in J_p}$ generates a space isomorphic to $\ell_p$.

Summing up, if we put $A'=B\cup A_{\Cvx}$ and $J_p'=J_p$ for $p\in A_{\Cvx}$, we have that $(J_p')_{p\in A'}$ is a partition of $J$ such that $(\xx_j)_{j\in J_p'}$ generates a space isomorphic to $\ell_p$. Consequently, $\XX\approx \bigoplus_{p\in A'} \ell_p$. By Proposition~\ref{prop:B}, $A=A'$.
\end{proof}

As an application we obtain new additions to the list of spaces with a unique unconditional basis. Recall that a quasi-Banach space $\XX$ with an unconditional basis $\BB$ is said to have a \emph{unique unconditional basis up to a permutation} if every normalized unconditional basis of $\XX$ is permutatively equivalent to $\BB$.

\begin{theorem}Let $A$ be a finite subset of indexes of $(0,1]\cup\{2,\infty\}$. Then the quasi-Banach space $\bigoplus_{p\in A}\ell_{p}$ has a unique unconditional basis, up to permutation.
\end{theorem}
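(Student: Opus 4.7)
The plan is to reduce uniqueness of unconditional basis in $\XX=\bigoplus_{p\in A}\ell_p$ to the (known) uniqueness of unconditional basis in each summand $\ell_p$ for $p\in(0,1]\cup\{2,\infty\}$, using the Main Theorem as the splitting mechanism.

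First, let $\BB=(\xx_j)_{j\in J}$ be an arbitrary normalized unconditional basis of $\XX$. By Theorem~\ref{thm:main} applied directly to $\BB$, there is a partition $(J_p)_{p\in A}$ of $J$ such that, for each $p\in A$, the subbasis $(\xx_j)_{j\in J_p}$ is an unconditional basis of a subspace isomorphic to $\ell_p$ (with $\ell_\infty$ meaning $c_0$ per the paper's convention). Thus $\BB$ is the direct sum, over $p\in A$, of an unconditional basis of $\ell_p$.

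Next, for each individual $p\in(0,1]\cup\{2,\infty\}$, the quasi-Banach space $\ell_p$ is known to have a unique unconditional basis up to permutation: for $\ell_2$ and $c_0$ this is the classical theorem of K\"othe--Lindenstrauss and of Lindenstrauss--Pe\l czy\'nski respectively, and for $\ell_p$ with $0<p\le 1$ this is the theorem of Kalton (see \cite{AlbiacKalton2016}*{Chapter 9} for a unified exposition). Consequently, for each $p\in A$, the subbasis $(\xx_j)_{j\in J_p}$ is permutatively equivalent to the canonical unit vector basis $\EE_{\ell_p}$.

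Finally, assembling these permutative equivalences over the finite set $A$ gives that $\BB$ is permutatively equivalent to $\bigoplus_{p\in A}\EE_{\ell_p}$, which is the canonical unconditional basis of $\XX$. Since $\BB$ was arbitrary, $\XX$ has a unique unconditional basis up to permutation. The only nontrivial ingredient is Theorem~\ref{thm:main} itself; once the splitting is granted, the argument is purely a matter of invoking the per-summand uniqueness results, and the restriction $A\subseteq(0,1]\cup\{2,\infty\}$ is exactly what ensures each summand falls within the known uniqueness range (note that $\ell_p$ for $1<p<\infty$, $p\ne 2$, is \emph{not} in this range, which is why the hypothesis on $A$ cannot be relaxed beyond $\{2,\infty\}$ on the locally convex side).
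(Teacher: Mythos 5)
Your proof is correct and takes essentially the same route as the paper, which likewise deduces the theorem by combining Theorem~\ref{thm:main} with the known uniqueness of unconditional basis of $\ell_p$ for each $p\in(0,1]\cup\{2,\infty\}$ and assembling over the finite index set. (Only a citation quibble: the uniqueness for $\ell_2$ is due to K\"othe--Toeplitz, that for $\ell_1$ and $c_0$ to Lindenstrauss--Pe\l czy\'nski, and Kalton's theorem covers $0<p<1$.)
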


\begin{proof}It just follows by combining Theorem~\ref{thm:main} and the uniqueness of unconditional basis of $\ell_p$ for $p\in A$ (\cites{Kalton1977, KotheToeplitz1934, LinPel1968}). \end{proof}

\begin{remark} Observe that Theorem~\ref{thm:keytechnique} provides a proof of the uniqueness of unconditional basis of $\ell_p$ for $p\in(0,1]\cup\{\infty\}$.
\end{remark}

%-----------------------
\begin{bibsection}
\begin{biblist}

\bib{AABW2019}{article}{
author={Albiac, F.},
author={Ansorena, J.~L.},
author={Bern\'a, P.},
author={Wojtaszczyk, P.},
title={Greedy approximation for biorthogonal systems in quasi-Banach spaces},
journal={Submitted},
}

\bib{AlbiacKalton2016}{book}{
author={Albiac, F.},
author={Kalton, N.~J.},
title={Topics in Banach space theory, 2nd revised and updated edition},
series={Graduate Texts in Mathematics},
volume={233},
publisher={Springer International Publishing},
%place={New York},
date={2016},
pages={xx+508},
}

\bib{AKL2004}{article}{
author={Albiac, F.},
author={Kalton, N.},
author={Ler\'{a}noz, C.},
title={Uniqueness of the unconditional basis of $\ell_1(\ell_p)$ and $\ell_p(\ell_1)$, $0<p<1$},
journal={Positivity},
volume={8},
date={2004},
number={4},
pages={443--454},
%issn={1385-1292},
%review={\MR{2117671}},
%doi={10.1007/s11117-003-8542-z},
}

\bib{AL2008}{article}{
author={Albiac, F.},
author={Ler\'{a}noz, C.},
title={Uniqueness of unconditional basis in Lorentz sequence spaces},
journal={Proc. Amer. Math. Soc.},
volume={136},
date={2008},
number={5},
pages={1643--1647},
%issn={0002-9939},
%review={\MR{2373593}},
%doi={10.1090/S0002-9939-08-09222-8},
}

\bib{AlbiacLeranoz2010}{article}{
author={Albiac, F.},
author={Ler\'{a}noz, C.},
title={An alternative approach to the uniqueness of unconditional basis of $\ell_p(c_0)$ for $0<p<1$},
journal={Expo. Math.},
volume={28},
date={2010},
number={4},
pages={379--384},
%issn={0723-0869},
%review={\MR{2734453}},
%doi={10.1016/j.exmath.2010.03.004},
}

\bib{AL2010}{article}{
author={Albiac, F.},
author={Ler\'{a}noz, C.},
title={Uniqueness of unconditional basis in quasi-Banach spaces which are
not sufficiently Euclidean},
journal={Positivity},
volume={14},
date={2010},
number={4},
pages={579--584},
%issn={1385-1292},
%review={\MR{2741318}},
%doi={10.1007/s11117-010-0055-y},
}

\bib{AL2011}{article}{
author={Albiac, F.},
author={Ler\'{a}noz, C.},
title={Uniqueness of unconditional bases in nonlocally convex $\ell_1$-products},
journal={J. Math. Anal. Appl.},
volume={374},
date={2011},
number={2},
pages={394--401},
%issn={0022-247X},
%review={\MR{2729229}},
%doi={10.1016/j.jmaa.2010.09.048},
}

\bib{Aoki1942}{article}{
author={Aoki, T.},
title={Locally bounded linear topological spaces},
journal={Proc. Imp. Acad. Tokyo},
volume={18},
date={1942},
pages={588\ndash 594},
}

\bib{BePe1958}{article}{
author={Bessaga, C.},
author={Pe{\l}czy{\'n}ski, A.},
title={On bases and unconditional convergence of series in Banach spaces},
journal={Studia Math.},
volume={17},
date={1958},
pages={151--164},
%issn={0039-3223},
%review={\MR{0115069 (22 \#5872)}},
}

\bib{CK1998}{article}{
author={Casazza, P.~G.},
author={Kalton, N.~J.},
title={Uniqueness of unconditional bases in Banach spaces},
journal={Israel J. Math.},
volume={103},
date={1998},
pages={141--175},
%issn={0021-2172},
%review={\MR{1613564}},
%doi={10.1007/BF02762272},
}

\bib{CK1999}{article}{
author={Casazza, P.~G.},
author={Kalton, N.~J.},
title={Uniqueness of unconditional bases in $c_0$-products},
journal={Studia Math.},
volume={133},
date={1999},
number={3},
pages={275--294},
%issn={0039-3223},
%review={\MR{1687211}},
}

\bib{EdWo1976}{article}{
author={Edelstein, I.~S.},
author={Wojtaszczyk, P.},
title={On projections and unconditional bases in direct sums of Banach spaces},
journal={Studia Math.},
volume={56},
date={1976},
number={3},
pages={263--276},
%issn={0039-3223},
%review={\MR{0425585}},
%doi={10.4064/sm-56-3-263-276},
}

\bib{KaPe1961}{article}{
author={Kadec, M.~I.},
author={Pe\l czy\'nski, A.},
title={Bases, lacunary sequences and complemented subspaces in the spaces $L_{p}$},
journal={Studia Math.},
volume={21},
date={1961/1962},
pages={161--176},
%issn={0039-3223},
%review={\MR{0152879}},
%doi={10.4064/sm-21-2-161-176},
}

\bib{Kalton1977}{article}{
author={Kalton, N.~J.},
title={Orlicz sequence spaces without local convexity},
journal={Math. Proc. Cambridge Philos. Soc.},
volume={81},
date={1977},
number={2},
pages={253--277},
%issn={0305-0041},
%review={\MR{0433194}},
%doi={10.1017/S0305004100053342},
}

\bib{Kalton1984}{article}{
author={Kalton, N.~J.},
title={Convexity conditions for nonlocally convex lattices},
journal={Glasgow Math. J.},
volume={25},
date={1984},
number={2},
pages={141--152},
%issn={0017-0895},
%review={\MR{752808}},
%doi={10.1017/S0017089500005553},
}

\bib{Kalton1986}{article}{
author={Kalton, N. J.},
title={Banach envelopes of nonlocally convex spaces},
journal={Canad. J. Math.},
volume={38},
date={1986},
number={1},
pages={65--86},
%issn={0008-414X},
%review={\MR{835036}},
%doi={10.4153/CJM-1986-004-2},
}

\bib{KPR1984}{book}{
author={Kalton, N.~J.},
author={Peck, N.~T.},
author={Roberts, J.~W.},
title={An $F$-space sampler},
series={London Mathematical Society Lecture Note Series},
volume={89},
publisher={Cambridge University Press},
place={Cambridge},
date={1984},
}

\bib{KotheToeplitz1934}{article}{
author={K\"othe, G.},
author={Toeplitz, O.},
title={Lineare Raume mit unendlich vielen Koordinaten und Ringen unendlicher Matrizen},
journal={J. Reine Angew Math.},
volume={171},
date={1934},
pages={193--226},
}

\bib{LinPel1968}{article}{
author={Lindenstrauss, J.},
author={Pe\l czy\'{n}ski, A.},
title={Absolutely summing operators in $L_{p}$-spaces and their applications},
journal={Studia Math.},
volume={29},
date={1968},
pages={275--326},
%issn={0039-3223},
%review={\MR{0231188}},
%doi={10.4064/sm-29-3-275-326},
}

\bib{LinTza1977}{book}{
author={Lindenstrauss, J.},
author={Tzafriri, L.},
title={Classical Banach spaces. I},
note={Sequence spaces; Ergebnisse der Mathematik und ihrer Grenzgebiete, Vol. 92},
publisher={Springer-Verlag, Berlin-New York},
date={1977},
pages={xiii+188},
}

\bib{LinTza1977II}{book}{
author={Lindenstrauss, J.},
author={Tzafriri, L.},
title={Classical Banach spaces. II},
note={Function spaces;
Ergebnisse der Mathematik und ihrer Grenzgebiete, Vol. 92},
publisher={Springer-Verlag, Berlin-New York},
date={1977},
%pages={xiii+188},
}

\bib{Ortynski1981}{article}{
author={Ortynski, A.},
title={Unconditional bases in $\ell_{p}\bigoplus\ell_{q},$ $0<p<q<1$},
journal={Math. Nachr.},
volume={103},
date={1981},
pages={109--116},
%issn={0025-584X},
%review={\MR{653916}},
%doi={10.1002/mana.19811030108},
}

\bib{Rolewicz1957}{article}{
author={Rolewicz, S.},
title={On a certain class of linear metric spaces},
language={English, with Russian summary},
journal={Bull. Acad. Polon. Sci. Cl. III.},
volume={5},
date={1957},
pages={471\ndash 473, XL},
}

\bib{Singer1962}{article}{
author={Singer, I.},
title={Some characterizations of symmetric bases in Banach spaces},
journal={Bull. Acad. Polon. Sci. S\'er. Sci. Math. Astronom. Phys.},
volume={10},
date={1962},
pages={185--192},
%issn={0001-4117},
%review={\MR{0147880}},
}

\bib{Stiles1972}{article}{
author={Stiles, W.~J.},
title={Some properties of $\ell_{p}$, $0<p<1$},
journal={Studia Math.},
volume={42},
date={1972},
pages={109--119},
%issn={0039-3223},
%review={\MR{0308726}},
}

\end{biblist}
\end{bibsection}
%-----------------------

%------------------------------------------------------------------------
\end{document}